\documentclass[12pt]{amsart}

\oddsidemargin 3ex
\evensidemargin 3ex
\textheight 8.1in
\textwidth 6.0in

\font\bbbld=msbm10 scaled\magstephalf
%\font\bbbld=msbm10 scaled\magstep1

%\newcommand{\v1}{{\bf 1}}

\newcommand{\bM}{\bar{M}}

\newcommand{\bfR}{\hbox{\bbbld R}}

\newcommand{\ol}{\overline}
\newcommand{\ul}{\underline}

\newtheorem{theorem}{Theorem}[section]

 \theoremstyle{definition}

\theoremstyle{remark}

\numberwithin{equation}{section}

%    Absolute value notation

%    Blank box placeholder for figures (to avoid requiring any
%    particular graphics capabilities for printing this document).

\begin{document}

\title[Hessian equations of parabolic type]
{Second order estimates for Hessian equations of parabolic type
on Riemannian manifolds}

%% use optional labels to link authors explicitly to addresses:
%% \author[label1,label2]{<author name>}
%% \address[label1]{<address>}
%% \address[label2]{<address>}

\author{Heming Jiao}
\address{Department of Mathematics, Harbin Institute of Technology,
         Harbin, 150001, China}
\email{jiao@hit.edu.cn}

%\address[a]{Department of Mathematics, Harbin Institute of Technology, Harbin, 150001, China}
%\cortext[co]{Corresponding author. Tel, +86 451 83104386.}

\begin{abstract}
In this paper, we establish the second order
estimates of solutions to the first initial-boundary value problem
for general Hessian type fully nonlinear parabolic equations on Riemannian manifolds.
The techniques used in this article can work for a wide range of fully
nonlinear PDEs under very general conditions.

{\em Keywords:} Fully nonlinear parabolic equations, Riemannian manifolds, {\em a priori} estimates,
The first initial-boundary value problem.
\end{abstract}
\maketitle

%%
%% Start line numbering here if you want
%%
% \linenumbers

%% main text
\section{Introduction}

Let $(M^n, g)$ be a compact Riemannian manifold of
dimension $n \geq 2$ with smooth boundary $\partial M$
and $\bM := M \cup \partial M$.
We will study the equation
\begin{equation}
\label{eqn}
f (\lambda(\nabla^{2}u + A [u])) - u_t = \psi (x, t, u, \nabla u)
\end{equation}
in $M_T = M \times (0,T] \subset M \times \mathbb{R}$, where $f$ is
a symmetric smooth function of $n$ variables, $\nabla^2 u$ denotes the
Hessian of $u (x, t)$ with respect to $x \in M$,
$A [u] = A (x, t, \nabla u)$ is a $(0,2)$ tensor on
$\bM$ which may depend on $t \in [0, T]$ and $\nabla u$
and
\[ \lambda (\nabla^2 u + A [u]) = (\lambda_1 ,\ldots,\lambda_n) \]
denotes the
eigenvalues of $\nabla^2 u + A [u]$ with respect to the metric $g$.

In this paper we are mainly concerned with the \emph{a priori} $C^2$ estimates for solutions
to \eqref{eqn} with boundary condition
\begin{equation}
\label{eqn-bd}
u = \varphi \mbox{ on }\mathcal{P} M_T,
\end{equation}
where
$\varphi \in C^{\infty}(\overline{\mathcal{P}M_T})$ satisfying
$\lambda (\nabla^{2} \varphi(x,0) + A[\varphi(x,0)]) \in \Gamma$ for all $x \in \bM$.
Here $\mathcal{P} M_T
= B M_T \cup S M_T$ is the parabolic boundary of $M_T$ with $B M_T = M
\times \{0\}$ and $S M_T = \partial M \times [0,T]$.

The idea of this paper is mainly from Guan and Jiao \cite{GJ} where the authors studied
the second order estimates for the elliptic counterpart of \eqref{eqn}:
\begin{equation}
\label{3I-10}
%F (x, u, \nabla u, \nabla^{2} u) \equiv
f (\lambda (\nabla^2 u + A (x, u, \nabla u)))
  = \psi (x, u, \nabla u).
\end{equation}
Comparing with the elliptic case,
the main difficulty in deriving the second order estimates for the parabolic equation
\eqref{eqn} is from its degeneracy which is overcome by using the strict subsolution
in this paper. Surprisingly, thanks to the strict subsolution, we are able to relax
some restrictions to $f$.
Again because of the degeneracy, we do not get the higher estimates and the
existence of classical solution. It is useful to consider viscosity solutions to
\eqref{eqn} which will be addressed in forthcoming papers.

The first initial-boundary value problem for equation of form \eqref{eqn} in $\mathbb{R}^n$
with $A \equiv 0$ and $\psi = \psi (x,t)$ was studied by Ivochkina and Ladyzhenskaya in \cite{IL1}
(when $f = \sigma_n^{1/n}$) and \cite{IL2}. Jiao and Sui treated the case that
$A \equiv \chi (x)$ and $\psi = \psi (x,t)$ on Riemannian manifolds using the techniques
of \cite{G} and \cite{GJ}.
For the elliptic Hessian equations on manifolds, we refer
the readers to Li \cite{LiYY90}, Urbas \cite{U}, Guan \cite{G1,G,G14}, Guan and Jiao \cite{GJ} and
their references.

As in \cite{CNS}, in which the authors studied the equations \eqref{3I-10}
with $A \equiv 0$ and $\psi = \psi (x)$
in a bounded domain of $\mathbb{R}^n$, $f \in C^\infty (\Gamma) \cap C^0 (\overline{\Gamma})$
is assumed to be defined on $\Gamma$, where $\Gamma$ is an open, convex,
symmetric proper subcone of $\mathbb{R}^{n}$ with vertex at the origin and
\[\Gamma^{+} \equiv \{\lambda \in \mathbb{R}^{n}:\mbox{ each component }
   \lambda_{i} > 0\} \subseteq \Gamma,\]
and to satisfy the following structure conditions in this paper:
\begin{equation}
\label{f1}
f_{i} \equiv \frac{\partial f}{\partial \lambda_{i}} > 0 \mbox{ in } \Gamma,\ \ 1\leq i\leq n,
\end{equation}
\begin{equation}
\label{f2}
f\mbox{ is concave in }\Gamma,
\end{equation}
and
\begin{equation}
\label{f5}
f > 0 \mbox{ in } \Gamma, \ \ f = 0 \mbox{ on } \partial \Gamma.
\end{equation}

Typical examples are given by $f = \sigma^{1/k}_k$ and $f = (\sigma_k / \sigma_l)^{1/(k - l)}$,
$1 \leq l < k \leq n$, defined in the cone
$\Gamma_{k} = \{\lambda \in \mathbb{R}^{n}: \sigma_{j} (\lambda) > 0, j = 1, \ldots, k\}$,
where $\sigma_{k} (\lambda)$ are the elementary symmetric functions
\[\sigma_{k} (\lambda) = \sum_ {i_{1} < \ldots < i_{k}}
\lambda_{i_{1}} \ldots \lambda_{i_{k}},\ \ k = 1, \ldots, n.\]
Another interesting example is $f = \log P_k$, where
\[ P_k (\lambda) := \prod_{i_1 < \cdots < i_k}
(\lambda_{i_1} + \cdots + \lambda_{i_k}), \;\; 1 \leq k \leq n,\]
defined in the cone
\[ \mathcal{P}_k : = \{\lambda \in \bfR^n:
      \lambda_{i_1} + \cdots + \lambda_{i_k} > 0\}. \]

We call a function $u(x,t)$ admissible if $\lambda(\nabla^{2} u + A[u]) \in \Gamma$ in $M \times [0, T]$.
It is shown in \cite{CNS} that \eqref{f1} ensures that equation \eqref{eqn}
is parabolic for admissible solutions. \eqref{f2} means that the function $F$ defined
by $F (A) = f (\lambda [A])$ is concave for $A \in \mathcal{S}^{n \times n}$ with
$\lambda [A] \in \Gamma$, where $\mathcal{S}^{n \times n}$ is the set of $n \times n$
symmetric matrices.

Throughout the paper we assume $A [u]$ is smooth on $\bM_T$ for $u \in C^{\infty} (\bM_T)$,
$\psi \in C^{\infty} (T^*\bM \times [0,T] \times \bfR)$ (for convenience we shall write
$\psi = \psi (x, t, z, p)$ for $(x, p) \in T^*\bM$, $t \in [0, T]$ and $z \in \bfR$ though).
Note that for fixed $(x, t) \in \bM_T$ and $p \in T^*_x M$,
\[ A (x, t, p): T^*_x M \times T^*_x M \rightarrow \bfR \]
is a symmetric bilinear map. We shall use the notation
\[ A^{\xi \eta} (x, \cdot, \cdot) := A (x, \cdot, \cdot) (\xi, \eta), \;\;
\xi,  \eta \in T^*_x M \]
and, for a function $v \in C^{2,1}_{x,t} (M_T)$,  $A [v] := A (x, t, \nabla v)$,
$A^{\xi \eta} [v] := A^{\xi \eta} (x, t, \nabla v)$ (see \cite{GJ}).

In this paper
we assume that there exists an admissible function
$\underline{u} \in C^{2} (\bM_T)$ satisfying
\begin{equation}
\label{sub}
f(\lambda(\nabla^{2} \underline{u} + A [\ul u])) - \underline{u}_{t}
    \geq \psi(x, t, \ul u, \nabla \ul u) + \delta_0 \mbox{ in } M \times [0, T].
\end{equation}
for some positive constant $\delta_0$ with $\ul u = \varphi$ on $\partial M \times [0, T]$
and $\ul u \leq \varphi$ in $M \times \{0\}$.

%By using an idea of Caffarelli, Nirenberg and Spruk \cite{CNS} (see \cite{G1} and \cite{IL2} also), one can show the following:
%\begin{theorem}
%\label{consub}
%Suppose there exists a function $\eta \in C^2 (\bM)$
%with $\lambda [\nabla^2 \eta] \in \Gamma$.
%If $M$ satisfies the condition that there exists a sufficient
%large number $R > 0$ such that, at every point $x \in \partial M$,
%\begin{equation}
%\label{dombd}
%(\kappa_1, \ldots, \kappa_{n-1}, R) \in \Gamma,
%\end{equation}
%where $\kappa_1, \ldots, \kappa_{n-1}$ are the principal curvatures of $\partial M$
%with respect to the interior normal, then there exists an smooth function $w: \bM \rightarrow
%\mathbb{R}$ such that $\lambda [\nabla^2 w] \in \Gamma$ and $w = 0$ on $\partial M$.

%If, in addition, $f$ satisfies for every $C > 0$ and every compact set $K$ in $\Gamma$
%there is a number $R = R (C, K)$ such that
%\begin{equation}
%\label{f9}
%f (R \lambda) \geq C \mbox{  for all } \lambda \in K,
%\end{equation}
%then, the functions $\ul u^A (x, t) = A w (x) + \varphi (x , t)$ for $(x, t) \in \bM_T$, satisfies
%\[f (\lambda (\nabla^2 \ul u^A)) \geq c (A) \rightarrow \infty \mbox{ as } A \rightarrow \infty\]
%and $\ul u = \varphi$ on $\partial M \times [0, T]$.
%\end{theorem}
%Theorem \ref{consub} means that under the conditions \eqref{dombd} and \eqref{f9},
%there exists an admmissible function satisfying \eqref{sub}.

We shall prove the following Theorem.
\begin{theorem}
\label{jsui-th1}
Let $u \in C^4 (\bM_T)$ be an
admissible solution of (\ref{eqn}).
Suppose \eqref{f1}-\eqref{f5} and \eqref{sub} hold.
Assume that
\begin{equation}
\label{A2}
\mbox{$-\psi (x,t,z,p)$ and $A^{\xi \xi} (x,t,p)$ are concave
   in $p$}, \;\; \forall \, \xi \in T_x M,
\end{equation}
\begin{equation}
\label{A4}
\psi_z \leq 0.
% \; (x, z, p) \in \ol \Omega \times \bfR \times T^*_x M,
\end{equation}
Then
\begin{equation}
\label{hess-a10}
\max_{\bM_T} |\nabla^2 u| \leq
 C_1 \big(1 + \max_{\mathcal{P} M_T}|\nabla^2 u|\big)
\end{equation}
where $C_1 > 0$ depends on $|u|_{C^1_x (\bM_T)}$ and $|\ul u|_{C^2 (\bM_T)}$.
Suppose that $u$ also satisfies the boundary condition
\eqref{eqn-bd} and, in addition,
assume that
\begin{equation}
\label{3I-45}
\sum f_{i} (\lambda) \lambda_{i} \geq 0,
     \; \forall \, \lambda \in \Gamma,
\end{equation}
\begin{equation}
\label{comp}
f (\lambda (\nabla^2 \varphi (x, 0) + A [\varphi (x, 0)])) - \varphi_t (x, 0)
   = \psi [\varphi (x, 0)],\ \ \forall x \in \bM,
\end{equation}
and
\begin{equation}
\label{lbd0}
\varphi_t (x, t) + \psi (x, t, z, p) > 0
\end{equation}
for each $(x, t) \in SM_T$, $p \in T^*_x \bM$ and $z \in \mathbb{R}$.
Then there exists $C_2 > 0$ depending on
$|u|_{C^1_x (\bM_T)}$, $|\ul u|_{C^2 (\bM_T)}$ and
$|\varphi|_{C^4 (\mathcal{P} M_T)}$ such that
\begin{equation}
\label{hess-a10b}
\max_{\mathcal{P} M_T}|\nabla^2 u| \leq C_2.
\end{equation}
\end{theorem}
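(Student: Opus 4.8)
\emph{Proof plan.} The plan is to prove the two assertions separately: first the global bound \eqref{hess-a10}, reducing the interior estimate to the parabolic boundary by a maximum-principle argument in the spirit of Guan--Jiao \cite{GJ} but powered by the strict subsolution \eqref{sub}; and then the boundary bound \eqref{hess-a10b}, by estimating the pure tangential, the mixed tangential--normal, and the double normal second derivatives on $SM_T$ in turn. The portion of $\mathcal{P}M_T$ lying on $BM_T = M\times\{0\}$ is trivial, since there $u(\cdot,0)=\varphi(\cdot,0)$ forces $\nabla^2u(x,0)=\nabla^2\varphi(x,0)$, which is bounded by $|\varphi|_{C^2}$.

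For \eqref{hess-a10}, write $w:=\nabla^{2}u+A[u]$, $\underline{w}:=\nabla^{2}\underline{u}+A[\underline{u}]$, and let $\kappa_1(x,t)$ be the largest eigenvalue of $w$. Since $\Gamma$ is a proper symmetric convex subcone with $\Gamma^{+}\subseteq\Gamma$, one has $\Gamma\subseteq\{\lambda:\sum_i\lambda_i>0\}$, so $\tr w>0$ and hence $|\nabla^{2}u|\le C(1+\kappa_1)$; thus it suffices to bound $\kappa_1$. I would test the function $\kappa_1\,e^{\phi}$ on $\bM_T$ with $\phi=\phi(|\nabla u|^2)+a(\underline{u}-u)$ for constants to be chosen. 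If its maximum lies on $\mathcal{P}M_T$, then $\kappa_1\le C(1+\max_{\mathcal{P}M_T}|\nabla^{2}u|)$ and we are done; otherwise, at an interior maximum $(x_0,t_0)\in M_T$, choose a local $g$-orthonormal frame diagonalizing $w$ there with the top eigenvector $e_1$ frozen ($\nabla e_1=0$ at $x_0$), so that $\log w_{11}+\phi$ has an interior maximum: its spatial gradient vanishes and $\cL(\log w_{11}+\phi)\le0$, where $\cL:=F^{ij}\nabla_{ij}-\partial_t$ is the linearized operator. Differentiating \eqref{eqn} once and twice in the $e_1$-direction, discarding the $F^{ij,kl}$-term by the concavity \eqref{f2}, controlling the commutator and curvature terms by $O\bigl(\kappa_1(1+\sum F^{ii})\bigr)$, and using \eqref{A2} to give the terms quadratic in $\nabla^{2}u$ coming from $\psi$ and from $A^{e_1e_1}$ the favourable sign (with \eqref{A4} disposing of the $\psi_z$ contribution), one arrives at a differential inequality for $\cL(w_{11})$. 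The decisive point, which defeats the degeneracy, is to feed in the strict subsolution: the concavity of $F$ gives
\[ F^{ij}\bigl(\underline{w}_{ij}-w_{ij}\bigr)\ \ge\ F(\underline{w})-F(w)\ \ge\ \delta_0+(\underline{u}_t-u_t)+(\psi[\underline{u}]-\psi[u]), \]
and, combined with the dichotomy lemma of \cite{GJ} (at a point with $\kappa_1$ large, either $\sum F^{ii}$ is bounded below, or $F^{ij}(\underline{w}_{ij}-w_{ij})\ge\epsilon_0(1+\sum F^{ii})$), this lets one choose $\phi$ so that $\cL(\log w_{11}+\phi)\le0$ already forces $\kappa_1(x_0,t_0)\le C$, with $C$ depending only on $|u|_{C^1_x(\bM_T)}$ and $|\underline{u}|_{C^2(\bM_T)}$.

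For \eqref{hess-a10b}, fix $(x_0,t_0)\in SM_T$ and use boundary normal coordinates with $e_n$ the interior unit normal and $e_1,\dots,e_{n-1}$ tangent to $\partial M$. Pure tangential derivatives are immediate: since $u=\varphi$ on $SM_T$, $\nabla^{2}u(e_\alpha,e_\beta)$ there equals $\nabla^{2}\varphi(e_\alpha,e_\beta)$ up to the second fundamental form of $\partial M$ times $\nabla_n(u-\varphi)$, hence is bounded by $|\varphi|_{C^2}$ and $|u|_{C^1_x}$. For a mixed derivative $u_{\alpha n}$ I would run the standard barrier argument: apply the linearized operator (with its first-order part from $\psi_p$) to $\pm T(u-\varphi)+Bv$, where $T$ is a first-order tangential operator adapted to $\partial M$ and $v=(\underline{u}-u)+s\,d-\tfrac{N}{2}d^2$ with $d=\operatorname{dist}(\cdot,\partial M)$; using \eqref{sub} (which again produces a term $\le-c\,\delta_0(1+\sum F^{ii})$) together with \eqref{A2}, one gets $\cL v\le-\epsilon(1+\sum F^{ii})$ in a half-ball $M\cap B_\rho(x_0)$, while $v\ge0$ on its parabolic boundary --- here \eqref{comp} is what makes $u-\varphi$ vanish to the required order on $\{t=0\}$ --- so choosing $B$ then $N$ large makes $\pm T(u-\varphi)+Bv$ superparabolic inside and nonnegative on the parabolic boundary; since it vanishes at $(x_0,t_0)$, its inward normal derivative there is $\ge0$, giving $|u_{\alpha n}(x_0,t_0)|\le C$. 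Finally, for the double normal derivative, Cauchy interlacing and the tangential and mixed bounds force all but the largest eigenvalue of $w(x_0,t_0)$ to stay bounded, so if $w_{nn}(x_0,t_0)$ were large then $\lambda(w(x_0,t_0))$ would be a small perturbation of $(\mu_1,\dots,\mu_{n-1},w_{nn})$ with $\mu'$ bounded and $w_{nn}\to\infty$; but on $SM_T$ one has $u_t=\varphi_t$ and the tangential part of $\nabla u$ equals that of $\nabla\varphi$, so \eqref{eqn} reads $f(\lambda(w(x_0,t_0)))=\varphi_t+\psi(x_0,t_0,\varphi,\nabla u)$, which by \eqref{lbd0} and the $C^1$ bound lies in a fixed interval $[c_1,C]$ with $c_1>0$. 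Condition \eqref{3I-45}, via the argument of Caffarelli--Nirenberg--Spruck \cite{CNS} as adapted in \cite{GJ}, rules this out, so $w_{nn}(x_0,t_0)\le C$; since also $w_{nn}=\tr w-\sum_{\alpha<n}w_{\alpha\alpha}>-C$ (using $\tr w>0$ and the tangential bound), we conclude $|\nabla^{2}u(x_0,t_0)|\le C$, completing \eqref{hess-a10b}.

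I expect the main obstacle to be the first part: because \eqref{f5} makes \eqref{eqn} genuinely degenerate, the linearized operator $\cL$ can lose all ellipticity ($\sum F^{ii}\to0$) precisely where $\nabla^{2}u$ is large, so the naive maximum principle yields nothing. Everything turns on extracting quantitative ellipticity from the strict subsolution through the concavity inequality above and the dichotomy of \cite{GJ}, and on tuning $\phi$ --- in particular the function of $|\nabla u|^2$ --- so that the leftover gradient and curvature terms get absorbed; the same mechanism, transplanted to the half-ball, is what makes the mixed boundary barrier work, and it is also what lets one relax the restrictions on $f$ relative to the elliptic case.
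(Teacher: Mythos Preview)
Your architecture mirrors the paper's: the test function $U_{11}e^{\phi}$ with $\phi=\tfrac{\delta}{2}|\nabla u|^{2}+b(\ul u-u)$ for the interior bound, and tangential/mixed/double-normal derivatives treated in turn on $SM_T$. Two points need sharpening.

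In the global estimate you cannot simply ``discard the $F^{ij,kl}$-term by concavity.'' The critical-point identity $\nabla_i U_{11}=-U_{11}\nabla_i\phi$ leaves you with $F^{ii}(\nabla_i\phi)^{2}$ to absorb, and with your $\phi$ this contributes $Cb^{2}\sum_i F^{ii}$; there is no choice of $b$ for which $b\,\cL(\ul u-u)\ge b\theta(1+\sum F^{ii})$ dominates $Cb^{2}\sum F^{ii}$. The paper keeps the $F^{ij,kl}$ contribution, groups it with the squared-gradient term into
\[ E=\frac{1}{U_{11}^{2}}F^{ii}(\nabla_i U_{11})^{2}+\frac{1}{U_{11}}F^{ij,kl}\nabla_1U_{ij}\nabla_1U_{kl}, \]
and invokes the Andrews--Gerhardt inequality so that the dangerous $b^{2}$ coefficient lands only on $\sum_{i\in J}F^{ii}$ (the indices with $U_{ii}\le -sU_{11}$) and on $F^{11}$; these are then absorbed by $c_1\sum_i F^{ii}U_{ii}^{2}$. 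Separately, no dichotomy is invoked: the \emph{strict} subsolution hypothesis \eqref{sub} yields $\cL(\ul u-u)\ge\theta(1+\sum F^{ii})$ outright (Theorem~\ref{barrier}), which is the simplification afforded by the strict inequality in \eqref{sub}.

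For the double normal, your sketch hides the main work and mislocates \eqref{comp}. The paper follows Trudinger: set $G[r_{\alpha\beta}]=f(\lambda'(r),R)$, $m_R=\min_{\overline{SM_T}}\bigl(G[U_{\alpha\beta}]-u_t-\psi[u]\bigr)$, and prove $m_R>0$ for $R$ large by a \emph{second} barrier argument. At the minimizing boundary point $(x_0,t_0)$ one constructs an auxiliary $\varPhi\ge 0$ on $\mathcal{P} M_{\delta}$ with $\varPhi(x_0,t_0)=0$, applies the maximum principle to $\varPsi+\varPhi$, and from $\nabla_n\varPhi(x_0,t_0)\ge -C$ extracts $\nabla_{nn}u(x_0,t_0)\le C/(\epsilon_1 c_R)$. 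The compatibility condition \eqref{comp} is used precisely here --- to verify $\varPhi>0$ on $BM_T$ --- and not in the mixed-derivative barrier (on $\{t=0\}$ one already has $u=\varphi$, so $\nabla_\alpha(u-\varphi)\equiv 0$ with no compatibility needed). Finally \eqref{lbd0} closes the loop: once $\nabla_{nn}u(x_0,t_0)$ is bounded, $\lambda(U(x_0,t_0))$ sits in a compact subset of $\Gamma$ by \eqref{f5}, so $m_R>0$ for $R$ large, whence $\nabla_{nn}u\le C$ on $\overline{SM_T}$ via Lemma~1.2 of \cite{CNS}.
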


Since $u$ is admissible, we have, by \eqref{A2},
\[
\triangle u + \mathrm{tr} A_{p_k} (x, t, 0) \nabla_k u
  + \mathrm{tr} A (x, t, 0) \geq \triangle u + \mathrm{tr} A (x, t, \nabla u) > 0
\]
and by the maximum principle it is easy to derive
the estimate
\begin{equation}
\label{gj-I115}
 \max_{\bM_T} |u| + \max_{\mathcal{P} M_T} |\nabla u| \leq C.
\end{equation}

Combining with the gradient estimates (Theorem \ref{p-th0}-\ref{jsui-th2}), we can prove
the following theorem immediately.
\begin{theorem}
\label{thm-main}
Let $u \in C^4 (\bM_T)$ be an
admissible solution of (\ref{eqn}) in $M_T$ with
$u \geq \ul u$ in $M_T$ and $u = \varphi$ on $\mathcal{P} M_T$.
Suppose \eqref{f1}-\eqref{f5}, \eqref{sub}-\eqref{A4},
and \eqref{3I-45}-\eqref{lbd0} hold. Then we have
\begin{equation}
\label{gsui-3}
|u|_{C^{2,1}_{x, t} (\bM_T)}\leq C,
\end{equation}
where $C > 0$ depends on $n$, $M$ and $|\ul u|_{C^2(\bM_T)}$
under any of the following additional assumptions:
(\romannumeral1) \eqref{A1-parabolic}-\eqref{A3}
hold for $\gamma_1 < 4$, $\gamma_2 = 2$
in \eqref{A1-parabolic};
(\romannumeral2) $(M^n, g)$ has nonnegative sectional curvature and
\eqref{A1-parabolic} hold
for $\gamma_1, \gamma_2 < 2$;
(\romannumeral3) \eqref{A1-parabolic}, \eqref{f4}-\eqref{p-G20**}
hold for $\gamma_1, \gamma_2 < 4$ in \eqref{A1-parabolic} and
$\gamma < 2$ in \eqref{p-A5}-\eqref{p-G20**}.
\end{theorem}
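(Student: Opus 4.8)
The plan is to reduce Theorem \ref{thm-main} to the three ingredients already assembled in the paper: the interior/global Hessian bound \eqref{hess-a10} of Theorem \ref{jsui-th1}, the boundary Hessian bound \eqref{hess-a10b} of Theorem \ref{jsui-th1}, and the gradient estimates promised in Theorems \ref{p-th0}--\ref{jsui-th2}. Since the hypotheses \eqref{f1}--\eqref{f5}, \eqref{sub}--\eqref{A4} and \eqref{3I-45}--\eqref{lbd0} are exactly those required by Theorem \ref{jsui-th1}, and since the hypothesis $u \ge \ul u$ in $M_T$ together with $u = \varphi$ on $\mathcal{P}M_T$ is in force, chaining \eqref{hess-a10} with \eqref{hess-a10b} gives $\max_{\bM_T}|\nabla^2 u| \le C$ once $|u|_{C^1_x(\bM_T)}$ is under control. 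The remaining task is therefore a bookkeeping argument: (i) invoke \eqref{gj-I115} to bound $\max_{\bM_T}|u|$ and the boundary gradient; (ii) invoke one of the gradient estimates of Theorems \ref{p-th0}--\ref{jsui-th2} — whichever is licensed by the additional assumption (\romannumeral1), (\romannumeral2) or (\romannumeral3) being imposed — to bound $\max_{\bM_T}|\nabla u|$; (iii) feed this $C^1$ bound into $C_1$ and $C_2$ of Theorem \ref{jsui-th1} to get the full Hessian bound; and (iv) observe that the $C^{2,1}_{x,t}$ norm also controls $u_t$, because $u_t = f(\lambda(\nabla^2 u + A[u])) - \psi(x,t,u,\nabla u)$ and the right-hand side is now bounded in terms of the already-controlled quantities (using that $f$ is, say, Lipschitz on the relevant compact range of eigenvalues, or simply monotone and bounded between the values at $\ul u$ and at the subsolution comparison).

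First I would state the logical skeleton: under any of (\romannumeral1)--(\romannumeral3), the corresponding gradient theorem applies and yields $|\nabla u|_{C^0(\bM_T)} \le C$ with $C$ of the advertised dependence (on $n$, $M$, $|\ul u|_{C^2(\bM_T)}$, and the structural data absorbed into the statement). I would remark that the three cases differ only in which growth exponents $\gamma_1, \gamma_2, \gamma$ in the structure conditions \eqref{A1-parabolic}, \eqref{A3}, \eqref{f4}--\eqref{p-G20**}, \eqref{p-A5}--\eqref{p-G20**} are admissible, and that in each case the hypotheses of the relevant gradient theorem are met verbatim; so the gradient bound is quoted, not reproved.

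Next I would combine the estimates. Having $|u|_{C^1_x(\bM_T)} \le C$, the constant $C_1$ in \eqref{hess-a10} is now an explicit constant of the claimed type (it also depends on $|\ul u|_{C^2(\bM_T)}$, which is allowed). Likewise $C_2$ in \eqref{hess-a10b} depends on $|u|_{C^1_x(\bM_T)}$, $|\ul u|_{C^2(\bM_T)}$ and $|\varphi|_{C^4(\mathcal{P}M_T)}$; since $\varphi$ and $\ul u$ are fixed data, $C_2$ is again of the claimed type. Plugging \eqref{hess-a10b} into \eqref{hess-a10} gives $\max_{\bM_T}|\nabla^2 u| \le C_1(1 + C_2) =: C$. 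Finally, to close the $C^{2,1}_{x,t}$ estimate one bounds $|u_t|$ pointwise from the equation \eqref{eqn}: the eigenvalues $\lambda(\nabla^2 u + A[u])$ lie in a compact subset of $\Gamma$ bounded in terms of the Hessian and gradient bounds just obtained, so $f(\lambda(\nabla^2 u + A[u]))$ is bounded, and $\psi(x,t,u,\nabla u)$ is bounded since $\psi$ is smooth and $(u,\nabla u)$ is bounded; hence $|u_t| \le C$. Collecting, $|u|_{C^{2,1}_{x,t}(\bM_T)} \le C$.

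The only genuine subtlety — and the step I would flag as the place where care is needed — is verifying that, in each of the three regimes (\romannumeral1)--(\romannumeral3), the hypotheses of the correct gradient theorem among \ref{p-th0}--\ref{jsui-th2} are exactly the ones being assumed, and that the dependence of the gradient constant really collapses to $n$, $M$ and $|\ul u|_{C^2(\bM_T)}$ (as asserted) rather than secretly involving $|u|_{C^1}$ in a circular way; one must check the gradient estimate is genuinely a priori. Assuming the gradient theorems are stated in that self-contained form (as the phrasing of Theorem \ref{thm-main} presupposes), there is no circularity: the gradient bound comes first and feeds everything else. I would therefore present the proof as: quote the gradient bound for the relevant case; quote \eqref{hess-a10} and \eqref{hess-a10b} and combine; bound $u_t$ from the PDE; conclude \eqref{gsui-3}.
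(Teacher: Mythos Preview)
Your proposal is correct and follows exactly the approach of the paper, which simply states that Theorem~\ref{thm-main} follows immediately by combining the $C^0$/boundary-gradient estimate \eqref{gj-I115}, the gradient estimates of Theorems~\ref{p-th0}--\ref{jsui-th2}, and the second-order estimates \eqref{hess-a10}--\eqref{hess-a10b} of Theorem~\ref{jsui-th1}; your write-up is in fact more detailed than the paper's one-line justification. One minor imprecision: the eigenvalues $\lambda(\nabla^2 u + A[u])$ are only known to lie in a bounded subset of $\Gamma$, hence in a compact subset of $\overline{\Gamma}$ (not of $\Gamma$ itself), but since $f \in C^0(\overline{\Gamma})$ this still yields the bound on $f$ and hence on $u_t$.
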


The rest of this paper is organized as follows. In Section 2, we introduce
some preliminaries and present a brief review
of some elementary formulas. In Section 3 and Section 4, we
establish the global and boundary estimates for second order
derivatives respectively. The gradient estimates are derived in
Section 5.

\section{Preliminaries}
\label{gj-P}
\setcounter{equation}{0}
\medskip

Throughout the paper $\nabla$ denotes the Levi-Civita connection
of $(M^n, g)$. The curvature tensor is defined by
\[ R (X, Y) Z = - \nabla_X \nabla_Y Z + \nabla_Y \nabla_X Z
                 + \nabla_{[X, Y]} Z. \]

Let $e_1, \ldots, e_n$ be local frames on $M^n$. We denote
$g_{ij} = g (e_i, e_j)$, $\{g^{ij}\} = \{g_{ij}\}^{-1}$.
Define the Christoffel symbols $\Gamma_{ij}^k$ by
$\nabla_{e_i} e_j = \Gamma_{ij}^k e_k$ and the curvature
coefficients
\[  R_{ijkl} = g( R (e_k, e_l) e_j, e_i), \;\; R^i_{jkl} = g^{im} R_{mjkl}.  \]
We shall use the notation $\nabla_i = \nabla_{e_i}$,
$\nabla_{ij} = \nabla_i \nabla_j - \Gamma_{ij}^k \nabla_k $, etc.

For a differentiable function $v$ defined on $M^n$, we usually
identity $\nabla v$ with the gradient of $v$, and use
$\nabla^2 v$ to denote the Hessian of $v$ which is locally given by
$\nabla_{ij} v = \nabla_i (\nabla_j v) - \Gamma_{ij}^k \nabla_k v$.
We recall that $\nabla_{ij} v =\nabla_{ji} v$ and
\begin{equation}
\label{hess-A70}
 \nabla_{ijk} v - \nabla_{jik} v = R^l_{kij} \nabla_l v,
\end{equation}
\begin{equation}
\label{hess-A80}
\begin{aligned}
\nabla_{ijkl} v - \nabla_{klij} v
= R^m_{ljk} \nabla_{im} v & + \nabla_i R^m_{ljk} \nabla_m v
      + R^m_{lik} \nabla_{jm} v \\
  & + R^m_{jik} \nabla_{lm} v
      + R^m_{jil} \nabla_{km} v + \nabla_k R^m_{jil} \nabla_m v.
\end{aligned}
\end{equation}

Let $u \in C^4 (\bM_T)$ be an admissible solution of
equation~\eqref{eqn}.
For simplicity we shall denote $U := \nabla^2 u + A (x, t, \nabla u)$
and, under a local frame $e_1, \ldots, e_n$,
\[ U_{ij} \equiv U (e_i, e_j) = \nabla_{ij} u + A^{ij} (x, t, \nabla u), \]
\begin{equation}
\label{gj-P10}
\begin{aligned}
\nabla_k U_{ij}
  \equiv \,& \nabla U (e_i, e_j, e_k)
             = \nabla_{kij} u + \nabla_k A^{ij} (x, t, \nabla u)  \\
  \equiv \,& \nabla_{kij} u  + A_{x_k}^{ij} (x, t, \nabla u)
              + A^{ij}_{p_l} (x, t, \nabla u) \nabla_{kl} u,
 \end{aligned}
\end{equation}
\begin{equation}
\label{gj-P10-1}
\begin{aligned}
(U_{ij})_t
  \equiv \,& (U (e_i, e_j))_t
             = (\nabla_{ij} u)_t + A^{ij}_t (x, t, \nabla u)
               + A^{ij}_{p_l} (x, t, \nabla u) (\nabla_{l} u)_t \\
  \equiv \,& \nabla_{ij} u_t  + A_{t}^{ij} (x, t, \nabla u)
              + A^{ij}_{p_l} (x, t, \nabla u) \nabla_{l} u_t,
 \end{aligned}
\end{equation}
 where $A^{ij} = A^{e_{i} e_{j}}$ and
$A_{x_k}^{ij}$ denotes the {\em partial} covariant derivative of $A$
when viewed as depending on $x \in M$ only, while the meanings of
$A^{ij}_t$ and $A^{ij}_{p_l}$, etc are obvious. Similarly we can calculate
 $\nabla_{kl} U_{ij} = \nabla_k \nabla_l U_{ij} - \Gamma_{kl}^m \nabla_m U_{ij}$, etc.

Let $F$ be the function defined by
\[ F (h) = f (\lambda (h)) \]
for a $(0,2)$ tensor $h$ on $M$.

Following the literature we denote throughout this paper
\[ F^{ij} = \frac{\partial F}{\partial h_{ij}} (U), \;\;
  F^{ij, kl} = \frac{\partial^2 F}{\partial h_{ij} \partial h_{kl}} (U) \]
under an orthonormal local frame $e_1, \ldots, e_n$.
The matrix $\{F^{ij}\}$ has eigenvalues $f_1, \ldots, f_n$ and
is positive definite by assumption (\ref{f1}), while (\ref{f2})
implies that $F$ is a concave function of $U_{ij}$ (see \cite{CNS}).
Moreover, when $\{U_{ij}\}$ is diagonal so is $\{F^{ij}\}$, and the
following identities hold
\[   F^{ij} U_{ij} = \sum f_i \lambda_i, \;\; F^{ij} U_{ik} U_{kj} = \sum f_i \lambda_i^2,
  \;\; \lambda (U) = (\lambda_1, \ldots, \lambda_n). \]

Define the linear operator $\mathcal{L}$ locally by
\[\mathcal{L} v = F^{ij} \nabla_{ij} v + (F^{ij} A^{ij}_{p_k} - \psi_{p_k}) \nabla_k v - v_t,\]
for $v \in C^{2, 1}_{x, t} (M_T)$.
We can prove
\begin{theorem}
\label{barrier}
Let $u$ be an admissible solution to \eqref{eqn} with $u \geq \ul u$ in $M_T$.
Assume that \eqref{f1}, \eqref{f2}, \eqref{A2} and \eqref{A4} hold. Then there exists a constant
$\theta > 0$ depending only on $\delta_0$ and $\ul u$ such that
\begin{equation}
\label{sub-bar}
\mathcal{L} (\ul u - u) \geq \theta (1 + \sum F^{ii})
\end{equation}
\end{theorem}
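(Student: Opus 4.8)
The plan is to exploit the strict subsolution inequality \eqref{sub} together with concavity of $F$ to produce the claimed lower bound on $\mathcal{L}(\ul u - u)$. First I would expand $\mathcal{L}(\ul u - u)$ using the definition of $\mathcal{L}$ and the formula $U_{ij} = \nabla_{ij} u + A^{ij}(x,t,\nabla u)$; writing $\ul U := \nabla^2 \ul u + A[\ul u]$, I want to rewrite $F^{ij}\nabla_{ij}(\ul u - u)$ in terms of $F^{ij}(\ul U_{ij} - U_{ij})$ plus correction terms coming from $A^{ij}(x,t,\nabla \ul u) - A^{ij}(x,t,\nabla u)$. By concavity of $F$ as a function of the tensor entries (guaranteed by \eqref{f2}), we have
\[
F^{ij}(\ul U_{ij} - U_{ij}) \geq F(\ul U) - F(U) = f(\lambda(\ul U)) - f(\lambda(U)).
\]
Using equation \eqref{eqn} for $u$ and the strict subsolution inequality \eqref{sub} for $\ul u$, the right side is bounded below by
\[
\big(\ul u_t + \psi(x,t,\ul u, \nabla \ul u) + \delta_0\big) - \big(u_t + \psi(x,t,u,\nabla u)\big).
\]

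Next I would absorb the $u_t$ terms: the $-v_t$ piece of $\mathcal{L}$ applied to $\ul u - u$ contributes $-(\ul u_t - u_t)$, which exactly cancels the $\ul u_t - u_t$ appearing above, so those drop out cleanly. This leaves, after collecting everything, a bound of the shape
\[
\mathcal{L}(\ul u - u) \geq \delta_0 + \big(\psi(x,t,\ul u,\nabla \ul u) - \psi(x,t,u,\nabla u)\big) + F^{ij}\big(A^{ij}[\ul u] - A^{ij}[u]\big) + \big(F^{ij}A^{ij}_{p_k} - \psi_{p_k}\big)\nabla_k(\ul u - u),
\]
and I now have to show the last three groups of terms, which a priori could be negative and large (they involve $F^{ij}$, hence $\sum F^{ii}$), are controlled. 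The key inputs are the concavity hypotheses \eqref{A2}: since $A^{\xi\xi}(x,t,p)$ is concave in $p$, for each fixed diagonalizing frame $A^{ij}[\ul u] - A^{ij}[u] \geq A^{ij}_{p_k}[u]\,\nabla_k(\ul u - u)$ in the matrix sense when paired against the positive-definite $\{F^{ij}\}$, so that $F^{ij}(A^{ij}[\ul u] - A^{ij}[u]) \geq F^{ij}A^{ij}_{p_k}[u]\nabla_k(\ul u - u)$; similarly, concavity of $-\psi$ in $p$ gives $\psi(x,t,\ul u,\nabla \ul u) - \psi(x,t,\ul u,\nabla u) \geq \psi_{p_k}\nabla_k(\ul u - u)$. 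Combining these, the first-order terms in $\nabla_k(\ul u - u)$ telescope and cancel against the $(F^{ij}A^{ij}_{p_k} - \psi_{p_k})\nabla_k(\ul u - u)$ term, leaving
\[
\mathcal{L}(\ul u - u) \geq \delta_0 + \big(\psi(x,t,\ul u,\nabla u) - \psi(x,t,u,\nabla u)\big),
\]
and the remaining $\psi$-difference is nonnegative when $\ul u \leq u$ by \eqref{A4} (monotonicity $\psi_z \leq 0$), or at worst bounded by $C|u|_{C^0}$, absorbable into $\delta_0$ and a small multiple of $1 + \sum F^{ii}$.

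The main obstacle is the bookkeeping in matching the first-order ($\nabla_k$) terms so that the concavity inequalities for $A$ and for $-\psi$ can be applied simultaneously and the cross-terms cancel exactly; one must be careful that the concavity of $A^{\xi\xi}$ is in the scalar $\xi$-direction and needs to be summed against $\{F^{ij}\}$ using its positive-definiteness, and that $\psi$ is evaluated at the correct gradient argument ($\nabla u$ versus $\nabla \ul u$) at each stage. A secondary point is extracting the factor $1 + \sum F^{ii}$: since $\delta_0$ is a fixed positive constant and $\sum F^{ii} \geq 0$, one can always choose $\theta$ small enough (depending on $\delta_0$, on $|\ul u|_{C^2}$ through the size of $\nabla \ul u$ and the moduli of continuity of $A, \psi$, and on the $C^1$ bound for $u$ from \eqref{gj-I115}) so that $\delta_0 \geq \theta(1 + \sum F^{ii})$ fails only if $\sum F^{ii}$ is large — but in that regime the term $F^{ij}(A^{ij}[\ul u]-A^{ij}[u])$, together with whatever slack remains, must be shown to dominate; this is typically handled by noting the leftover genuinely controls $\theta \sum F^{ii}$ once the cancellations above are in place, so no further structural hypothesis beyond \eqref{f1}, \eqref{f2}, \eqref{A2}, \eqref{A4} is needed. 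I would present the argument by fixing a point, choosing an orthonormal frame diagonalizing $U$, and carrying out the three-line estimate above.
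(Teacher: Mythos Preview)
Your bookkeeping with the concavity of $F$, of $A^{\xi\xi}$ in $p$, and of $-\psi$ in $p$, together with $\psi_z\le 0$ and $u\ge \ul u$, is correct and matches the paper. It cleanly yields
\[
\mathcal{L}(\ul u - u)\ \ge\ \delta_0.
\]
The gap is in the last paragraph: from here you do \emph{not} obtain the factor $\sum F^{ii}$, and the hand-waving about ``leftover'' slack does not work. After you apply the concavity inequalities $F^{ij}(\ul U_{ij}-U_{ij})\ge F(\ul U)-F(U)$ and $A^{ij}[\ul u]-A^{ij}[u]\ge A^{ij}_{p_k}\nabla_k(\ul u-u)$, the remainders are nonnegative but carry no uniform lower bound: they vanish, for instance, wherever $\nabla u=\nabla\ul u$. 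So nothing in your argument prevents the situation $\mathcal{L}(\ul u-u)=\delta_0$ at a point where $\sum F^{ii}$ is arbitrarily large, and you cannot choose a single $\theta>0$ with $\delta_0\ge\theta(1+\sum F^{ii})$.

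The missing idea, which the paper supplies, is a perturbation of the subsolution tensor before applying concavity of $F$. Because $\ul u$ is a \emph{strict} admissible subsolution, there is $\varepsilon_0>0$ so that $\lambda(\ul U-\varepsilon_0 g)$ still lies in a compact subset of $\Gamma$ and
\[
f(\lambda(\ul U-\varepsilon_0 g))-\ul u_t\ \ge\ \psi[\ul u]+\tfrac{\delta_0}{2}.
\]
Now apply the concavity of $F$ with $\ul U-\varepsilon_0 g$ in place of $\ul U$:
\[
F^{ii}(\ul U_{ii}-U_{ii})\ =\ \varepsilon_0\sum F^{ii}+F^{ii}\big((\ul U_{ii}-\varepsilon_0\delta_{ii})-U_{ii}\big)\ \ge\ \varepsilon_0\sum F^{ii}+F(\ul U-\varepsilon_0 g)-F(U).
\]
From this point your own cancellation argument (concavity of $A$ and $-\psi$ in $p$, and $\psi_z\le 0$) runs verbatim and gives
\[
\mathcal{L}(\ul u-u)\ \ge\ \varepsilon_0\sum F^{ii}+\tfrac{\delta_0}{2}\ \ge\ \theta\Big(1+\sum F^{ii}\Big),\qquad \theta:=\min\{\varepsilon_0,\tfrac{\delta_0}{2}\}.
\]
So the fix is a one-line insertion, but without it the conclusion as stated does not follow.
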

\begin{proof}
Since $\ul u$ is admissible satisfying \eqref{sub}, there exists a constant $\varepsilon_0 > 0$ such that
$\{x \in \bM_T: \lambda (\nabla^2 \ul u + A [\ul u] - \varepsilon_0 g)\}$ is a compact subset of $\Gamma$
and
\[
f (\lambda (\nabla^2 \ul u + A [\ul u] - \varepsilon_0 g))
- \ul u_t \geq \psi [\ul u] + \frac{\delta_0}{2} \mbox{ in $M_T$.}
\]
Let $\theta = \min \{\frac{\delta_0}{2}, \varepsilon_0\}$. For each $(x, t) \in M_T$, we may
assume $\{U_{ij}\} = \{\nabla_{ij} u + A^{ij}\}$ is diagonal at
$(x, t)$. From \eqref{A2}, \eqref{A4} and the concavity of $F$, we see, at $(x, t)$,
\begin{equation}
\label{para-100}
\begin{aligned}
F^{ii} (\ul U_{ii} - \varepsilon_0 g_{ii} - U_{ii}) - (\ul u - u)_t
   \geq \,& \psi (x, t, \ul u, \nabla \ul u) - \psi (x, t, u, \nabla u) + \frac{\delta_0}{2}\\
   \geq \,& \psi (x, t, u, \nabla \ul u) - \psi (x, t, u, \nabla u) + \frac{\delta_0}{2}\\
   \geq \,& \psi_{p_k} \nabla_k (\ul u - u) + \frac{\delta_0}{2}.
\end{aligned}
\end{equation}
By \eqref{A2} again, we have
\begin{equation}
\label{para-101}
\begin{aligned}
F^{ii} (\ul U_{ii} - U_{ii}) = \,& F^{ii} \nabla_{ii} (\ul u - u)
  + F^{ii} (A^{ii} (x, t, \nabla \ul u) - A^{ii} (x, t, \nabla u))\\
    \geq \,& F^{ii} \nabla_{ii} (\ul u - u) + F^{ii} A^{ii}_{p_k} \nabla_k (\ul u - u).
\end{aligned}
\end{equation}
Combining \eqref{para-100} and \eqref{para-101}, we get
\[
\mathcal{L} (\ul u - u) \geq \varepsilon_0 \sum F^{ii} + \frac{\delta_0}{2}
   \geq \theta (1 + \sum F^{ii})
\]
\end{proof}

\section{Global estimates for second derivatives}
\setcounter{equation}{0}

In this section, we prove \eqref{hess-a10} in Theorem \ref{jsui-th1} for which we set
\[W = \max_{(x,t) \in \bar{M_T}} \max_{\xi \in T_x M, |\xi| = 1}
     (\nabla_{\xi\xi} u + A^{\xi \xi} (x, u, \nabla u) e^\phi,\]
as in \cite{GJ}, where $\phi$ is a function to be determined. It suffices to estimate $W$.
We may assume $W$ is achieved at $(x_{0}, t_{0}) \in \bM_T - \mathcal{P} M_T$.
Choose a smooth orthonormal local frame $e_{1}, \ldots, e_{n}$ about $x_{0}$
such that $\nabla_i e_j = 0$,  and
$U$ is diagonal at $(x_0, t_0)$.
We assume $U_{11} (x_0, t_0) \geq \ldots \geq U_{nn} (x_0, t_0)$.
%assume $\lambda_{1} \geq \ldots \geq \lambda_{n}$.
We have $W = U_{11} (x_0, t_0) e^{\phi (x_0, t_0)}$.

At the point $(x_{0}, t_{0})$ where the function
$\log U_{11} + \phi$ attains its maximum,
we have
\begin{equation}
\label{gs3}
\frac{\nabla_i U_{11}}{U_{11}} + \nabla_i \phi = 0
    \mbox{ for each } i = 1, \ldots, n,
\end{equation}
\begin{equation}
\label{gs4}
\frac{(U_{11})_t}{U_{11}} + \phi_t \geq 0,
\end{equation}
and
\begin{equation}
\label{gs5}
0 \geq \sum_{i} F^{ii} \{\frac{\nabla_{ii} U_{11}}{U_{11}}
   - \frac{(\nabla_i U_{11})^{2}}{U^{2}_{11}} + \nabla_{ii} \phi\}.
\end{equation}
Differentiating equation (\ref{eqn}) twice, we find
\begin{equation}
\label{gs1}
F^{ii} \nabla_{k} U_{ii} - \nabla_k u_t = \psi_{x_k} + \psi_{u} \nabla_{k} u
            + \psi_{p_j} \nabla_{kj} u, \mbox{ for all } k,
\end{equation}
and
\begin{equation}
\label{gs2}
\begin{aligned}
F^{ii} \nabla_{11} U_{ii}
     + \,& F^{ij,kl} \nabla_{1} U_{ij} \nabla_{1} U_{kl} - \nabla_{11} u_t \\
  \geq \,& \psi_{p_{j}} \nabla_{11j} u + \psi_{p_{l} p_{k}} \nabla_{1k} u \nabla_{1l} u
       - C U_{11} \\
  \geq \,& \psi_{p_{j}} \nabla_j U_{11} + \psi_{p_{1} p_{1}} U_{11}^2 - C U_{11} \\
    = \,& - U_{11} \psi_{p_{j}} \nabla_j \phi + \psi_{p_{1} p_{1}} U_{11}^2 - C U_{11}.
\end{aligned}
\end{equation}
Next, by \eqref{gs3} and \eqref{gs1},
\begin{equation}
\label{gj-S80}
\begin{aligned}
F^{ii}(\nabla_{ii} A^{11} - \nabla_{11} A^{ii})
 \geq \,& F^{ii} (A^{11}_{p_{j}} \nabla_{iij} u - A^{ii}_{p_{j}} \nabla_{11j} u) \\
      \,& + F^{ii} (A^{11}_{p_i p_i} U_{ii}^2 - A^{ii}_{p_1 p_1} U_{11}^2)
          - C U_{11} \sum F^{ii} \\
 \geq \,& U_{11} F^{ii} A^{ii}_{p_{j}} \nabla_{j} \phi
          + A^{11}_{p_j} \nabla_j u_t - C U_{11} \sum F^{ii} - C U_{11} \\
        & - C \sum_{i \geq 2} F^{ii} U_{ii}^2
          - U_{11}^2 \sum_{i \geq 2} F^{ii} A^{ii}_{p_1 p_1}.
\end{aligned}
\end{equation}
Note that
\begin{equation}
\label{gj-S50}
\nabla_{ii} U_{11}
  \geq \nabla_{11} U_{ii} + \nabla_{ii} A^{11} - \nabla_{11} A^{ii} - C U_{11}.
\end{equation}
Thus, by (\ref{gs2}), \eqref{gj-S80} and \eqref{gs4}, we have, at $(x_{0}, t_{0})$,
\begin{equation}
\label{gs6}
\begin{aligned}
F^{ii} \nabla_{ii} U_{11} \geq \,& F^{ii} \nabla_{11} U_{ii} - C U_{11} (1 + \sum F^{ii})
      + A^{11}_{p_j} \nabla_j u_t\\
    & - C \sum_{i \geq 2} F^{ii} U_{ii}^2
          - U_{11}^2 \sum_{i \geq 2} F^{ii} A^{ii}_{p_1 p_1} + U_{11} F^{ii} A^{ii}_{p_{j}} \nabla_{j} \phi\\
             \geq \,& U_{11} \mathcal{L} \phi - U_{11} F^{ii} \nabla_{ii} \phi
                - F^{ij,kl} \nabla_{1} U_{ij} \nabla_{1} U_{kl} + \psi_{p_{1} p_{1}} U_{11}^2\\
                  & - C U_{11} (1 + \sum F^{ii}) - C F^{ii} U_{ii}^2
                    - U_{11}^2 \sum_{i \geq 2} F^{ii} A^{ii}_{p_1 p_1}.
\end{aligned}
\end{equation}
It follows that, by \eqref{gs5},
\begin{equation}
\label{gs11}
\mathcal{L} \phi \leq U_{11} \sum_{i \geq 2} F^{ii} A^{ii}_{p_1 p_1} - \psi_{p_{1} p_{1}} U_{11}
    + C (1 + \sum F^{ii}) + \frac{C}{U_{11}} F^{ii} U^2_{ii} + E,
\end{equation}
where
\[E = \frac{1}{U^2_{11}} F^{ii} (\nabla_i U_{11})^{2}
    + \frac{1}{U_{11}} F^{ij,kl} \nabla_{1} U_{ij} \nabla_{1} U_{kl}.\]
Let
\[ \phi = \frac{\delta |\nabla u|^2}{2} + b \eta, \]
where $b$, $\delta$ are undetermined constants, $0 < \delta < 1 \leq b$, and $\eta$ is a $C^{2}$
function which may depend on $u$ but not on its derivatives.
We calculate, at $(x_0, t_0)$,
\begin{equation}
\label{ps-gs3}
\nabla_{i} \phi
     = \delta \nabla_{j} u \nabla_{ij} u + b \nabla_{i} \eta
     = \delta \nabla_i u U_{ii} - \delta \nabla_{j} u A^{ij} + b \nabla_{i} \eta
\end{equation}
\begin{equation}
\label{ps-gs301}
\phi_t
     = \delta \nabla_{j} u (\nabla_{j} u)_t + b \eta_t
\end{equation}
\begin{equation}
\label{ps-gs4}
\nabla_{ii} \phi
   \geq \frac{\delta}{2} U_{ii}^2 - C \delta + \delta \nabla_{j} u \nabla_{iij} u
            + b \nabla_{ii} \eta.
\end{equation}
From \eqref{hess-A70} and \eqref{gs1}, we derive
\begin{equation}
\label{ps-gs7}
\begin{aligned}
F^{ii} \nabla_{j} u \nabla_{iij} u
  \geq \,& F^{ii} \nabla_{j} u (\nabla_{j} U_{ii} - \nabla_{j} A^{ii})
            - C |\nabla u|^2 \sum F^{ii} \\
  \geq \,& (\psi_{p_{l}} - F^{ii} A^{ii}_{p_l}) \nabla_{j} u \nabla_{jl} u
           + \nabla_j u \nabla_j (u_t) - C (1 + \sum F^{ii}).
\end{aligned}
\end{equation}
Therefore,
\begin{equation}
\label{ps-S100}
\begin{aligned}
\mathcal{L} \phi \geq b \mathcal{L} \eta + \frac{\delta}{2} F^{ii} U_{ii}^2
   - C \sum F^{ii} - C.
\end{aligned}
\end{equation}
Let  $\eta = \ul u - u$. We get
from \eqref{ps-gs3} that
\begin{equation}
\label{bs-gs3.5}
\begin{aligned}
(\nabla_{i} \phi)^2
  \leq C \delta^2 (1 + U_{ii}^2) + 2 b^2 (\nabla_{i} \eta)^2
  \leq C \delta^2 U_{ii}^2 + C b^2.
\end{aligned}
\end{equation}
For fixed $0 < s \leq 1/3$ let
\[  \begin{aligned}
J \,& = \{i: U_{ii} \leq - s U_{11}\}, \;\;
K = \{i:  U_{ii} > - s U_{11} \}.
  \end{aligned} \]

Using a result of Andrews \cite{A} and Gerhardt \cite{GC}
as in \cite{G} and \cite{GJ} (see \cite{U} also), we have
\begin{equation}
\label{gj-S140}
 E \leq C b^2 \sum_{i \in J} F^{ii}  + C \delta^2 \sum F^{ii} U_{ii}^2
            +  C \sum F^{ii} + C (\delta^2 U_{11}^2 + b^2) F^{11}.
\end{equation}
Therefore, by \eqref{gs11}, \eqref{ps-S100} and \eqref{gj-S140}, we have
\begin{equation}
\label{ps-S150}
\begin{aligned}
b \mathcal{L} \eta
   \leq \,& \Big(C \delta^2 - \frac{\delta}{2} + \frac{C}{U_{11}}\Big) F^{ii} U_{ii}^2
            + C b^2 \sum_{i \in J} F^{ii} + C \sum F^{ii} \\
          & + C (\delta^2 U_{11}^2 + b^2) F^{11} + C\\
             \leq \,& \Big(C \delta^2 - \frac{\delta}{2} + \frac{C}{U_{11}}\Big) F^{ii} U_{ii}^2
            + C b^2 \sum_{i \in J} F^{ii} + C \sum F^{ii} \\
          & + C b^2 F^{11} + C.
\end{aligned}
\end{equation}
Choose $\delta$ sufficiently small such that $C \delta^2 - \frac{\delta}{2}$
is negative and let
\[
c_1 := - \frac{1}{2} \Big(C \delta^2 - \frac{\delta}{2}\Big) > 0.
\]
We may assume
\[
C \delta^2 - \frac{\delta}{2} + \frac{C}{U_{11}} \leq - c_1
\]
for otherwise we have $U_{11} \leq \frac{C}{c_1}$ and we are done. Thus, by \eqref{sub-bar},
choosing $b$ sufficiently large, we derive from \eqref{ps-S150} that
\[
c_1 F^{ii}U_{ii}^2 - C b^2 F^{11} - C b^2\sum_{i \in J}F^{ii} \leq 0.
\]
Then we can get a bound $U_{11} (x_0, t_0) \leq C$ since
$|U_{ii}| \geq s U_{11}$ for $i \in J$.
The proof of \eqref{hess-a10} is completed.

\section{Boundary estimates for second derivatives}
\setcounter{equation}{0}

In this section, we consider the estimates of second order derivatives on parabolic boundary
$\mathcal{P} M_T$. We may assume $\varphi \in C^4 (\bM_T)$.

Fix a point $(x_{0}, t_{0}) \in S M_T$. We shall choose
smooth orthonormal local frames $e_1, \ldots, e_n$ around $x_0$ such that
when restricted to $\partial M$, $e_n$ is normal to $\partial M$.
Since $u - \ul{u} = 0$ on $S M_T$ we have
\begin{equation}
\label{hess-a200}
\nabla_{\alpha \beta} (u - \ul{u})
 = -  \nabla_n (u - \ul{u}) \varPi (e_{\alpha}, e_{\beta}), \;\;
\forall \; 1 \leq \alpha, \beta < n \;\;
\mbox{on  $S M_T$},
\end{equation}
where %$\nu$ is the unit normal to $\partial M$ and
$\varPi$ denotes the second fundamental form of $\partial M$.
Therefore,
\begin{equation}
\label{hess-E130}
|\nabla_{\alpha \beta} u| \leq  C,  \;\; \forall \; 1 \leq \alpha, \beta < n
\;\;\mbox{on} \;\; S M_T.
\end{equation}

Let $\rho (x)$ denote the distance from $x \in M$ to $x_{0}$,
\[\rho (x) \equiv \mathrm{dist}_{M^n} (x, x_{0}),\]
and set
\[M_{\delta} = \{X = (x, t) \in M \times (0,T]:
      \rho (x) < \delta, t \leq t_{0} + \delta\}.\]

For the mixed tangential-normal and pure normal second derivatives
at $(x_0, t_0)$, we shall use  the following barrier function as in
\cite{G},
\begin{equation}
\label{eq3-3}
 \varPsi
   = A_1 v + A_2 \rho^2 - A_3 \sum_{l< n} |\nabla_l (u - \varphi)|^2
             % \pm \nabla_{\alpha} (u - \varphi)
\end{equation}
where
$v = u - \ul{u}$.
By differentiating the equation \eqref{eqn} and
straightforward calculation, we obtain
\begin{equation}
\label{eq3-1}
\begin{aligned}
\mathcal{L} (\nabla_k (u - \varphi))  \leq \,& C \Big(1 + \sum f_i |\lambda_i| + \sum
f_i\Big),
 \;\; \forall \; 1 \leq k \leq n.
\end{aligned}
\end{equation}
Similar to \cite{G} (see \cite{GJ} also), using Proposition 2.19 and Corollary 2.21 of
\cite{G} and Theorem \ref{barrier}, we can prove that there exist uniform positive constants
$\delta$ sufficiently small, and $A_1$, $A_2$, $A_3$
sufficiently large such that
\begin{equation}
\label{pbs-E170'}
\mathcal{L} (\varPsi \pm \nabla_{\alpha} (u - \varphi)) \leq 0
  \mbox{ in } M_\delta
\end{equation}
and
$\varPsi \pm \nabla_{\alpha} (u - \varphi) \geq 0$ on $\mathcal{P} M_{\delta}$.
Thus, by the maximum principle, we see $\varPsi \pm \nabla_{\alpha} (u - \varphi) \geq 0$
in $M_{\delta}$. Then we get
\begin{equation}
\label{pbs-E130'}
|\nabla_{n\alpha} u (x_0, t_0)| \leq \nabla_n \varPsi (x_0, t_0) \leq C,
\;\; \forall \; \alpha < n.
\end{equation}

It remains to derive
\begin{equation}
\label{cma-200}
\nabla_{nn} u (x_{0}, t_{0}) \leq C
\end{equation}
since $\triangle u \geq - C$.
We shall use an idea of Trudinger \cite{T} as \cite{G} and \cite{GJ} to prove that
there exist uniform positive constants $c_{0}$, $R_{0}$ such that for
all $R > R_{0}$,
$(\lambda' [U], R) \in \Gamma$  and
\begin{equation}
\label{pbs-bs9}
f(\lambda' [U], R) - u_{t} \geq \psi [u] + c_{0} \mbox{ on } \overline{S M_T}
\end{equation}
which implies \eqref{cma-200} by Lemma 1.2 in \cite{CNS}, where
$\lambda' [U] = (\lambda'_{1}, \ldots, \lambda'_{n-1})$
denote the eigenvalues of the $(n-1) \times (n-1)$ matrix
$\{U_{\alpha \beta}\}_{1 \leq \alpha, \beta \leq (n-1)}$
and $\psi [u] = \psi (\cdot, \cdot, u, \nabla u)$.
For $R > 0$ and a symmetric $(n-1)^2$
matrix $\{r_{\alpha  {\beta}}\}$ with $(\lambda' (\{r_{\alpha \beta}\}), R) \in \Gamma$ , define
\[G [r_{\alpha \beta}] \equiv f (\lambda' [\{r_{\alpha \beta}\}], R)\]
and consider
\[m_R \equiv \min_{(x, t) \in \overline{S M_T}}
     G [U_{\alpha \beta} (x, t)] - u_{t}(x, t) - \psi [u].\]
Note that  $G$ is concave and $m_R$ is increasing in $R$ by \eqref{f1},
and that
\[\begin{aligned}
c_R \equiv \,& \inf_{\overline{S M_T}} (G[\ul U_{\alpha \beta}] - \ul u_t - \psi [\ul u])\\
       \geq \,& \inf_{\overline{S M_T}} (G[\ul U_{\alpha \beta}] - F [\underline{U}_{ij}]) > 0
\end{aligned}\]
when $R$ is sufficiently large.

We wish to show $m_R > 0$ for $R$ sufficiently large. Without loss of generality
we assume $m_R < c_R/2$ (otherwise we are done) and
suppose $m_R$ is achieved at a point $(x_{0}, t_{0}) \in \overline{S M_T}$.
Choose local orthonormal frames around $x_0$ as before and assume
$\nabla_{n n} u (x_0, t_0) \geq \nabla_{n n} \ul u (x_0, t_0)$.
Let $\sigma_{\alpha {\beta}} = \langle \nabla_{\alpha} e_{\beta}, e_n \rangle$
and
\[G^{\alpha\beta}_{0} = \frac{\partial G}{\partial r_{\alpha\beta}}
    [U_{\alpha\beta} (x_{0},t_{0})].\]
Note that
$\sigma_{\alpha \beta} = \varPi (e_\alpha, e_\beta)$ on $\partial M$
and that
\begin{equation}
\label{pbs-c-200}
G^{\alpha {\beta}}_0 (r_{\alpha  {\beta}} - U_{\alpha {\beta}} (x_0, t_0))
    \geq G [r_{\alpha  {\beta}}] - G [U_{\alpha  {\beta}} (x_0, t_0)]
 \end{equation}
for any symmetric matrix $\{r_{\alpha  \beta}\}$ with $(\lambda' [\{r_{\alpha  \beta}\}], R) \in \Gamma$
by the concavity of $G$.

In particular, since $u_t = \ul u_t = \varphi_t$ on $\overline{SM_T}$, we have
\begin{equation}
\label{pbs-c-210}
\begin{aligned}
G^{\alpha {\beta}}_0 U_{\alpha  {\beta}} - \psi [u] - \varphi_t
\,& - G^{\alpha {\beta}}_0 U_{\alpha  {\beta}} (x_0, t_0) + \psi [u] (x_0, t_0) + u_t (x_0, t_0)\\
\geq \,& G [U_{\alpha  {\beta}}] - \psi [u] - u_t - m_R \geq 0
\end{aligned}
\end{equation}
on $\overline{SM_T}$.

From \eqref{hess-a200} we see that
\begin{equation}
\label{pbs-c-220}
 U_{\alpha {\beta}} = \ul{U}_{\alpha {\beta}}
    - \nabla_n (u - \ul{u}) \sigma_{\alpha {\beta}}
+ A^{\alpha \beta}[u] - A^{\alpha \beta}[\ul u] \mbox{ on $\ol{SM_T}$}.
\end{equation}

Note that at $(x_0, t_0)$, we have
\begin{equation}
\label{pbs-c-225}
\begin{aligned}
 \nabla_n (u - \ul{u}) G^{\alpha \beta}_0 \sigma_{\alpha {\beta}}
   = \,& G^{\alpha {\beta}}_0 (\ul{U}_{\alpha \beta} - U_{\alpha \beta})
          + G^{\alpha {\beta}}_0 (A^{\alpha \beta}[u] - A^{\alpha \beta}[\ul u]) \\
\geq \,& G[\ul{U}_{\alpha {\beta}}] - G[U_{\alpha {\beta}}]
          + G^{\alpha {\beta}}_0 (A^{\alpha \beta}[u] - A^{\alpha \beta}[\ul u]) \\
  =  \,& G[\ul{U}_{\alpha {\beta}}] - \psi [u] - u_t - m_R
          + G^{\alpha {\beta}}_0 (A^{\alpha \beta}[u] - A^{\alpha \beta}[\ul u]) \\
 \geq \,& c_R - m_R + \psi [\ul u] + \ul u_t - \psi [u] - u_t\\
        & + G^{\alpha {\beta}}_0 (A^{\alpha \beta}[u] - A^{\alpha \beta}[\ul u]) \\
 \geq \,& \frac{c_R}{2} + H [u] - H[\ul u]
\end{aligned}
\end{equation}
where $H [u] = G^{\alpha {\beta}}_0 A^{\alpha \beta} [u] - \psi [u]$.

Define
\[ \varPhi = - \eta \nabla_n (u - \ul u) + H [u] - \varphi_t + Q \]
where $\eta = G^{\alpha {\beta}}_0 \sigma_{\alpha {\beta}}$ and
\[ Q \equiv G^{\alpha {\beta}}_0 \nabla_{\alpha {\beta}} \ul u
          - G^{\alpha {\beta}}_0 U_{\alpha {\beta}} (x_0, t_0) + \psi [u] (x_0, t_0) + u_t (x_0, t_0). \]
By virtue of \eqref{pbs-c-210} and \eqref{pbs-c-220} we see that
$\varPhi \geq 0$ on $\overline{S M_T}$ and $\varPhi (x_0, t_0) = 0$.

Next, by \eqref{eq3-1} and \eqref{A2},
\[  \begin{aligned}
\mathcal{L} H \leq \,& H_z [u] \mathcal{L} u
   + H_{p_k} [u] \mathcal{L} \nabla_k u
   + F^{ij} H_{p_k p_l} [u] \nabla_{ki} u \nabla_{lj} u\\
   & + C (\sum F^{ii} + \sum f_i |\lambda_i| + 1) \\
   \leq \,& C (\sum F^{ii} + \sum f_i |\lambda_i| + 1) + H_z [u] \mathcal{L} u.
   \end{aligned} \]
Since $H_z [u] \geq 0$, by Theorem \ref{barrier}, we have
\[
\mathcal{L} u = \mathcal{L} (u - \ul u) + \mathcal{L} \ul u \leq C (1 + \sum F^{ii}).
\]
It follows that
\[
\mathcal{L} H \leq C (\sum F^{ii} + \sum f_i |\lambda_i| + 1).
\]
Therefore,
\begin{equation}
\label{pbs-gjB360}
\mathcal{L} \varPhi
  \leq C (\sum F^{ii} + \sum f_i |\lambda_i| + 1).
\end{equation}
By the compatibility condition\eqref{comp}, we find that
\[
c'_R \equiv \inf_{x \in \bM} G (\nabla_{\alpha \beta} \varphi + A [\varphi]) (x, 0)
    - \psi [\varphi] (x, 0) - \varphi_{t} (x, 0) > 0
\]
when $R$ is sufficiently large.
We may assume $m_R < \frac{c'_R}{2}$ (otherwise we are done).
For $x \in \bM$, by the concavity of $G$ again, we have
\[
\begin{aligned}
\varPhi (x, 0) = \,& G_0^{\alpha \beta} (U_{\alpha \beta} (x, 0) - U_{\alpha \beta} (x_0, t_0))\\
         & - \psi [u] (x, 0) - \varphi_t (x, 0) + \psi [u] (x_0, t_0) + u_t (x_0, t_0)\\
     = \,& G^{\alpha \beta}_0 (\nabla_{\alpha \beta} \varphi + A [\varphi] (x, 0) - U_{\alpha \beta} (x_0, t_0))\\
              & - \varphi_t (x, 0) + u_{t}(x_{0},t_{0}) + \psi [u](x_{0}, t_{0}) - \psi [\varphi] (x, 0)\\
     \geq \,& G (\nabla_{\alpha \beta} \varphi + A [\varphi]) (x, 0) - G (U_{\alpha\beta} (x_0, t_0))\\
         & - \varphi_t (x, 0) + u_{t}(x_{0},t_{0}) + \psi [u](x_{0}, t_{0}) - \psi [\varphi] (x, 0)\\
         \geq \,& c'_R - m_R > \frac{c'_R}{2}.
\end{aligned}
\]
It means that $\varPhi > 0$ on $B M_{T}$. Thus,
we get $\varPhi \geq 0$ on $\mathcal{P} M_{\delta}$.

Consider the function $\varPsi$ defined in \eqref{eq3-3} as before.
Similarly, there exist another group of constants $A_1 \gg A_2 \gg A_3 \gg 1$ such that
\begin{equation}
\label{pbs-cma-106}
  \left\{ \begin{aligned}
%F^{ij} \,& \nabla_{ij}
  & \mathcal{L} (\varPsi + \varPhi) \leq  0 \;\; \mbox{ in $M_{\delta}$},  \\
        & \varPsi + \varPhi \geq 0 \;\; \mbox{ on $\mathcal{P} M_{\delta}$}.
\end{aligned} \right.
\end{equation}
By the maximum principle we find $\varPsi + \varPhi \geq 0$ in $M_{\delta}$.
It follows that $\nabla_n \varPhi (x_0, t_0) \geq - \nabla_n \varPsi (x_0, t_0) \geq -C$.

Following \cite{GJ}, we write $u^s = s u + (1-s) \ul u$ and
\[ H [u^s] = G^{\alpha {\beta}}_0 A^{\alpha \beta} [u^s] - \psi [u^s].  \]
We have
\[  \begin{aligned}
H [u] - H[\ul u]
     = \,& \int_0^1 \frac{d H[u^s]}{dt} ds \\
     = \,& (u - \ul{u})  \int _0^1 H_z [u^s] ds
       + \sum \nabla_k (u - \ul{u}) \int _0^1 H_{p_k} [u^s] ds.
       \end{aligned}  \]
Therefore, at $(x_0, t_0)$,
\begin{equation}
\label{c-235}
 H [u] - H[\ul u]
     =  \nabla_n (u - \ul{u}) \int _0^1 H_{p_n} [u^s] ds
\end{equation}
and
\begin{equation}
\label{c-245}
\begin{aligned}
 \nabla_n H [u]
 = \,& \nabla_n H [\ul u]
        +  \sum \nabla_{kn} (u - \ul{u}) \int _0^1 H_{p_k} [u^s] ds \\
   \,& + \nabla_n (u-\ul{u}) \int _0^1 (H_z [u^s]
        + H_{x_{n} p_n} [u^s] + H_{z p_n} [u^s] \nabla_n u^s) ds \\
   \,& + \nabla_n (u-\ul{u}) \sum \int _0^1 H_{p_n p_l} [u^s] \nabla_{ln} u^s ds \\
  \leq \,&  \nabla_{nn} (u - \ul{u})
          \int _0^1 (H_{p_n} [u^s] + s H_{p_n p_n} [u^s] \nabla_n (u - \ul{u})) ds
           + C \\
   \leq \,&  \nabla_{nn} (u - \ul{u}) \int _0^1 H_{p_n} [u^s] ds + C
       \end{aligned}
 \end{equation}
since $H_{p_n p_n} \leq 0$, $\nabla_{nn} (u - \ul{u}) \geq 0$
and $\nabla_{n} (u - \ul{u}) \geq 0$.
It follows that
\begin{equation}
\label{c-255}
\begin{aligned}
 \nabla_n \varPhi (x_0, t_0)
   \leq \,& - \eta (x_0, t_0) \nabla_{nn} (x_0, t_0) +  \nabla_n H [u] (x_0, t_0) + C \\
   \leq \,& \Big(- \eta (x_0, t_0) + \int _0^1 H_{p_n} [u^s] (x_0, t_0) ds\Big)
               \nabla_{nn} u (x_0, t_0) + C.
       \end{aligned}
 \end{equation}
 By \eqref{pbs-c-225} and \eqref{c-235},
\begin{equation}
\label{c-230}
\eta (x_0, t_0) -  \int _0^1 H_{p_n} [u^s] (x_0, t_0) ds
                \geq \frac{c_R}{2 \nabla_n (u - \ul{u}) (x_0, t_0)}
                \geq \epsilon_1 c_R > 0
\end{equation}
for some uniform $\epsilon_1 > 0$ independent of $R$.
This gives
\begin{equation}
\label{cma-310}
\nabla_{nn} u (x_0, t_0) \leq  \frac{C}{\epsilon_1 c_R}.
\end{equation}

So we have an {\em a priori}
upper bound for all eigenvalues of $\{U_{ij} (x_0, t_0)\}$.
Now by \eqref{lbd0}, there exists a constant $\nu_0 > 0$ such that
\[
\inf_{(x, t) \in \ol{SM_T}} \varphi_t (x, t) + \psi (x, t, u, \nabla u) \geq \nu_0.
\]
It follows that
$\lambda [\{U_{ij} (x_0, t_0)\}]$ is contained in a compact
subset of $\Gamma$ by \eqref{f5},
and therefore
\[ m_R = G [U_{\alpha \beta} (x_0, t_0)] - u_{t}(x_0, t_0) - \psi [u] (x_0, t_0) > 0 \]
when $R$ is sufficiently large.
Then \eqref{pbs-bs9} is valid and the proof of \eqref{hess-a10b} is completed.

\section{Gradient estimates}
\setcounter{equation}{0}

In this section we establish the gradient estimates to prove Theorem~\ref{p-th0}-\ref{jsui-th2} below.
Throughout the section,
we assume \eqref{f1}-\eqref{f2}, \eqref{A2}
and the following growth conditions hold
\begin{equation}
\label{A1-parabolic}
\left\{ \begin{aligned}
   & p \cdot \nabla_x A^{\xi \xi} (x, t, z, p)
        \leq \bar{\psi}_1 (x, t, z) |\xi|^2 (1 + |p|^{\gamma_1}) \\
   & p \cdot \nabla_x \psi (x, t, z, p)  + |p|^2 \psi_z (x, t, z, p)
        \geq - \bar{\psi}_2 (x, t, z) (1 + |p|^{\gamma_2})
  \end{aligned} \right.
\end{equation}
for some functions $\bar{\psi}_1, \bar{\psi}_2 \geq 0$ and constants
$\gamma_1, \gamma_2  > 0$.

Since the proofs of Theorem~\ref{p-th0}-\ref{jsui-th2} are similar
to those of Theorem 6.1-6.3 in \cite{GJ}, we only provide a sketch here.
For more details we refer the reader to \cite{GJ} where the elliptic
Hessian equations are treated.
\begin{theorem}
\label{p-th0}
Let $u \in C^3 (\bM_T)$ be an
admissible solution of (\ref{eqn}).
Assume, in addition, that
\begin{equation}
\label{gj-I105}
 \lim_{\sigma \rightarrow \infty} f (\sigma {\bf 1}) = + \infty
\end{equation}
where ${\bf 1}= (1, \ldots, 1) \in \bfR^n$ and
there exists a constant $c_0 > 0$ such that
\begin{equation}
\label{A3}
A^{\xi \xi}_{p_k p_l} (x,t,p) \eta_k \eta_l \leq - c_0 |\xi|^{2} |\eta|^{2}
  + c_0 |g (\xi, \eta)|^2, \;
\forall \, \xi, \eta \in T_x M.
\end{equation}
Suppose that $\gamma_1 < 4$, $\gamma_2 = 2$
in \eqref{A1-parabolic},
and that there is an admissible function $\ul u \in C^2 (\bM_T)$.
Then
\begin{equation}
\label{3I-R60}
\max_{\bM_T} |\nabla u|
     \leq C_3 \big(1 + \max_{\mathcal{P} M_T} |\nabla u|\big)
\end{equation}
where $C_3$ is a positive constant depending on $|u|_{C^0 (\bM_T)}$ and $|\ul u|_{C^1_x (\bM_T)}$.
\end{theorem}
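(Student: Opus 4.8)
The plan is to adapt the gradient-estimate argument of Guan--Jiao \cite{GJ} (their Theorem 6.1) to the parabolic setting, using the strictly subsolution barrier of Theorem~\ref{barrier} to absorb the extra terms coming from the $u_t$ part. As usual we may assume the maximum of $|\nabla u|$ over $\bM_T$ is attained at an interior-in-time point $(x_0,t_0)\in \bM_T\setminus\mathcal{P}M_T$; otherwise \eqref{3I-R60} is immediate. Set $w=|\nabla u|^2 e^{\phi}$ for an auxiliary function $\phi=\phi(u)$ (or $\phi=\phi(u,\ul u)$) to be chosen, and work at the point where $\log|\nabla u|^2+\phi$ is maximized. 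Choosing a local orthonormal frame with $\nabla_1 u=|\nabla u|$ and $\{U_{ij}\}$ diagonal at $(x_0,t_0)$, the first-order condition gives $\nabla_i|\nabla u|^2 = -|\nabla u|^2\nabla_i\phi$, and the evolution inequality $(\log|\nabla u|^2)_t+\phi_t\ge 0$ at the maximum.

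The core of the argument is to apply the operator $\mathcal{L}$ to $\log|\nabla u|^2+\phi$ and use $\mathcal{L}(\log|\nabla u|^2+\phi)\le 0$ at $(x_0,t_0)$. Differentiating \eqref{eqn} once as in \eqref{gs1}, writing $F^{ij}\nabla_k U_{ij}-\nabla_k u_t=\psi_{x_k}+\psi_z\nabla_k u+\psi_{p_j}\nabla_{kj}u$, one contracts with $\nabla_k u$ to handle $\mathcal{L}|\nabla u|^2$; the terms $F^{ii}(\nabla_i u)^2 U_{ii}^2$-type contributions and the concavity term $F^{ij,kl}$ drop out or have a favorable sign, while the troublesome terms are those involving $p\cdot\nabla_x A$ and $p\cdot\nabla_x\psi+|p|^2\psi_z$. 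These are exactly what \eqref{A1-parabolic} controls: with $\gamma_1<4$ the $A$-term is dominated by $|\nabla u|^{\gamma_1}\sum f_i$, which is beaten by the good negative term $-\tfrac{\delta}{2}|\nabla u|^2\sum F^{ii}|\lambda_i|$ or by $\sum f_i\lambda_i$ once $|\nabla u|$ is large (using \eqref{gj-I105} to guarantee $\sum f_i$ is not too small in the relevant regime), and with $\gamma_2=2$ the $\psi$-term is absorbed into $C(1+|\nabla u|^2)\sum F^{ii}$. Condition \eqref{A3} provides the concavity of $A^{\xi\xi}$ in $p$ with a definite sign, which is what makes the second-derivative-in-$p$ terms of $A$ have the right sign rather than merely being bounded; this is the parabolic analogue of the mechanism in \cite{GJ} and is the step that forces the restriction $\gamma_1<4$.

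Finally, one invokes Theorem~\ref{barrier}: choosing $\phi$ to involve $b(\ul u-u)$ with $b$ large, the term $b\mathcal{L}(\ul u-u)\ge b\theta(1+\sum F^{ii})$ supplies a positive quantity proportional to $1+\sum F^{ii}$ that dominates all the error terms carrying a factor $\sum F^{ii}$, after $\delta$ is fixed small and $b$ then chosen large depending on $\delta$, $\theta$, and the $C^0$ bounds. One is left with an inequality of the form $c|\nabla u|^{2}\sum F^{ii}|\lambda_i| + (\text{good terms}) \le C(1+|\nabla u|^{\gamma_1})(1+\sum F^{ii})$ (plus lower-order), which for $\gamma_1<4$ and $|\nabla u|(x_0,t_0)$ large yields a contradiction unless $|\nabla u|(x_0,t_0)\le C$; a separate elementary estimate handles the case $\sum f_i\lambda_i$ small via \eqref{gj-I105} exactly as in \cite{GJ}. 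I expect the main obstacle to be bookkeeping the exact powers of $|\nabla u|$ so that the $\gamma_1<4$, $\gamma_2=2$ thresholds come out sharp — in particular verifying that the term $F^{11}(\nabla_1|\nabla u|^2)^2/|\nabla u|^4 = F^{11}(\nabla_1\phi)^2$ and the third-order terms from \eqref{hess-A70} are genuinely controlled by the Andrews--Gerhardt-type inequality, rather than the conceptual structure, which follows \cite{GJ} closely.
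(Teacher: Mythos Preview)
Your outline has a real gap. You plan to close the estimate by invoking Theorem~\ref{barrier}, but that theorem requires the strict-subsolution hypothesis \eqref{sub} together with \eqref{A4}, neither of which is assumed in Theorem~\ref{p-th0}: here $\ul u$ is merely an admissible function. Even if the inequality $\mathcal{L}(\ul u-u)\ge\theta(1+\sum F^{ii})$ were available, a term of order $\sum F^{ii}$ cannot dominate the error $C|\nabla u|^{\gamma_1-2}\sum F^{ii}$ produced by \eqref{A1-parabolic} once $\gamma_1>2$, so the mechanism you describe would not deliver the threshold $\gamma_1<4$.

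The paper generates the dominant good term differently. With the test function $w\phi^{-a}$, $w=|\nabla u|$, $\phi=(u-\ul u)+b>0$, the strict concavity \eqref{A3} is used \emph{quantitatively}, not merely as a sign condition: a first-order Taylor expansion of $A^{ii}$ in $p$ gives
\[
-A^{ii}_{p_k}\nabla_k\phi \;\ge\; A^{ii}(x,t,\nabla\ul u)-A^{ii}(x,t,\nabla u)+\tfrac{c_0}{2}\bigl(|\nabla\phi|^2-|\nabla_i\phi|^2\bigr),
\]
and the resulting term $\tfrac{ac_0}{2\phi}|\nabla\phi|^2\sum F^{ii}\sim c\,|\nabla u|^{2}\sum F^{ii}$ is what beats $C|\nabla u|^{\gamma_1-2}\sum F^{ii}$ precisely when $\gamma_1<4$. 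The hypothesis \eqref{gj-I105} enters separately: by concavity of $F$ and admissibility of $\ul u$ one has $F^{ii}(\ul U_{ii}-U_{ii})\ge F(Bg)-2B\sum F^{ii}-\psi-u_t$ with $F(Bg)=f(B{\bf 1})$ as large as one likes, and this absorbs the $\psi$-side and $u_t$ errors. Neither $\sum f_i|\lambda_i|$ nor the Andrews--Gerhardt inequality appears in this argument; those belong to the second-order estimate of Section~3. Finally, note that one cannot in general choose an orthonormal frame with both $\nabla_1 u=|\nabla u|$ and $\{U_{ij}\}$ diagonal; the paper only diagonalizes $\{U_{ij}\}$.
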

\begin{proof}
Let $w = |\nabla u|$ and $\phi$ a positive function to be determined.
Suppose the function $w \phi^{-a}$
%$|\nabla u|^2 e^{A (\ul{u} - u)}$
achieves a positive maximum at an interior point $(x_0, t_0) \in M_T - \mathcal{P} M_T$
where $a < 1$ is a positive constant.
Choose a smooth orthonormal local frame $e_1, \ldots, e_n$
about $x_0$ such that $\nabla_{e_i} e_j = 0$ at $x_0$
and $\{U_{ij} (x_0, t_0)\}$ is diagonal.

The function $\log w - a \log \phi$ attains its maximum
at $(x_0, t_0)$ where for $i = 1, \ldots, n$,
\begin{equation}
\label{g1-p}
\frac{\nabla_i w}{w} - \frac{a \nabla_i \phi}{\phi} = 0,
\end{equation}
\begin{equation}
\label{g1-p'}
\frac{w_t}{w} - \frac{a \phi_t}{\phi} \geq 0
\end{equation}
and
\begin{equation}
\label{g2-p}
\frac{\nabla_{ii} w}{w} + \frac{(a - a^2) |\nabla_{i} \phi|^2}{\phi^2}
   - \frac{a \nabla_{ii} \phi}{\phi} \leq 0.
\end{equation}
Note that
\[ w \nabla_i w = \nabla_{l} u \nabla_{il} u, \ w w_t = \nabla_l u (\nabla_l u)_t. \]

By (\ref{hess-A70}), (\ref{g1-p}) and \eqref{gs1},
\begin{equation}
\label{g3-p}
\begin{aligned}
w \nabla_{ii} w
    = \,& \nabla_{l} u \nabla_{iil} u + \nabla_{il} u \nabla_{il} u
          - \nabla_i w \nabla_i w \\
    = \,& (\nabla_{lii}u+ R^{k}_{iil} \nabla_{k} u) \nabla_{l} u
          + \Big(\delta_{kl} - \frac{\nabla_{k} u \nabla_{l} u}{w^2} \Big)
            \nabla_{ik} u \nabla_{il} u \\
 \geq \,& (\nabla_l U_{ii} - A^{ii}_{p_k} \nabla_{lk} u - A^{ii}_{x_l}) \nabla_l u
         - C |\nabla u|^2\\
    = \,& \nabla_{l} u \nabla_l U_{ii} - \frac{a w^2}{\phi} A^{ii}_{p_{k}} \nabla_{k} \phi
           - \nabla_l u A^{ii}_{x_l} - C w^2.
\end{aligned}
\end{equation}
By \eqref{gs1}, \eqref{g1-p} and \eqref{g1-p'},
\begin{equation}
\label{p-g11'}
\begin{aligned}
 F^{ii} \nabla_{l} u \nabla_l U_{ii}
    = \,& \nabla_{l} u \psi_{x_l} + \psi_u |\nabla u|^2
          + \psi_{p_k} \nabla_{l} u \nabla_{lk} u + \nabla_l u \nabla_l u_t\\
    \geq \,& \nabla_{l} u \psi_{x_l} + \psi_u |\nabla u|^2
          + \frac{a w^2}{\phi} \psi_{p_k} \nabla_k \phi + \frac{a w^2}{\phi} \phi_t.
%    \geq \,& \nabla_{l} u \nabla'_{l} \psi + \psi_u |\nabla u|^2
%     + \frac{a w^2}{\phi}  (\psi (x, u, \nabla u) - \psi (x, u, \nabla \ul u)).
        \end{aligned}
\end{equation}
Let
$\phi = (u - \ul u) + b > 0$, where $b = 1 + \sup_{M_T} (\ul u - u)$.

By \eqref{A3} we have
\begin{equation}
\label{g3.5}
\begin{aligned}
%\sum_k
  - A^{ii}_{p_{k}} \nabla_{k} \phi
      = \,&  A^{ii}_{p_{k}} (x, t, \nabla u) \nabla_{k} (\ul u - u) \\
   \geq \,& A^{ii} (x, t, \nabla \ul{u}) - A^{ii} (x, t, \nabla u)
             + \frac{c_0}{2} (|\nabla \phi|^2 - |\nabla_i \phi|^2).
\end{aligned}
\end{equation}

We may assume that $c_0$ is sufficiently small and that
\[\frac{2 a - 2 a^2 - c_0 a \phi}{2 \phi^2} > 0\]
by choosing $a$ sufficiently small.

Thus, by (\ref{g2-p}), \eqref{g3-p}, \eqref{p-g11'} and \eqref{g3.5}, we find
\begin{equation}
\label{g10-p}
\begin{aligned}
0 \geq \,& \frac{a}{\phi} F^{ii} (\ul U_{ii} - U_{ii})
     + \frac{a c_0 |\nabla \phi|^2}{2 \phi} \sum F^{ii}
          + \frac{2a - 2a^2 - c_0 a \phi}{2 \phi^2} F^{ii} |\nabla_{i} \phi|^2\\
        & - \frac{1}{w^2} F^{ii} A^{ii}_{x_l} \nabla_{l} u
          + \frac{1}{w^2} \psi_{x_l} \nabla_{l} u + \psi_u + \frac{a}{\phi} \psi_{p_k} \nabla_k \phi
           + \frac{a}{\phi} \phi_t - C \sum F^{ii}\\
  \geq \,& \frac{a}{\phi} F^{ii} (\ul U_{ii} - U_{ii})
     + \frac{a c_0 |\nabla \phi|^2}{2\phi} \sum F^{ii} - C \sum F^{ii}\\
       & + \frac{a}{\phi} (\psi (x, t, u, \nabla u) - \psi (x, t, u, \nabla \ul u))\\
        & - \frac{1}{w^2} F^{ii} A^{ii}_{x_l} \nabla_{l} u
          + \frac{1}{w^2} \psi_{x_l} \nabla_{l} u + \psi_u
           + \frac{a}{\phi} (u - \ul u)_t
\end{aligned}
\end{equation}

Choose $B > 0$ sufficiently large such that (see \cite{GJ})
\[ F (2 B g + \ul U) \geq F (B g)   \;\; \mbox{in $\bM_T$}. \]

Therefore, by the concavity of $F$,
\begin{equation}
\label{g12}
\begin{aligned}
F^{ii} (\ul{U}_{ii}- U_{ii}) %\geq \,& F[\ul u] - F [u] - C + \epsilon \sum F^{ii} \\
 & \geq F (2 B g + \ul U) - F (U) - 2 B \sum F^{ii}\\
& \geq F (Bg) - 2 B \sum F^{ii} - \psi (x, t, u, \nabla u) - u_t.
\end{aligned}
\end{equation}

It follows from \eqref{A1-parabolic}, \eqref{gj-I105}, \eqref{g10-p} and \eqref{g12} that
\begin{equation}
\label{g10-p'}
\begin{aligned}
0 \geq \,& \frac{a}{\phi} F (Bg) - C - (C + 2B) \sum F^{ii}
     + \frac{a c_0 |\nabla \phi|^2}{2\phi} \sum F^{ii} \\
        & - \frac{1}{w^2} F^{ii} A^{ii}_{x_l} \nabla_{l} u
          + \frac{1}{w^2} \psi_{x_l} \nabla_{l} u + \psi_u\\
  \geq \,& (\frac{a c_0 |\nabla \phi|^2}{2\phi} - 3 B - C |\nabla u|^{\gamma_1 - 2})\sum F^{ii}
\end{aligned}
\end{equation}
provided $B$ is chosen sufficiently large. Thus, we get a bound $|\nabla u (x_0, t_0)| \leq C$
and so the proof of Theorem \ref{p-th0} is completed.
\end{proof}

\begin{theorem}
\label{p-th1a}
Let $u \in C^3 (\bM_T)$ be an
admissible solution of (\ref{eqn}) with $u \geq \ul u$ in $M_T$.
Assume, in addition, that \eqref{sub},
\eqref{A4} and \eqref{A1-parabolic} hold for $\gamma_1, \gamma_2 < 2$
in \eqref{A1-parabolic} and that
$(M^n, g)$ has nonnegative sectional curvature.
Then \eqref{3I-R60} holds.
\end{theorem}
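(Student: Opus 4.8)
The plan is to run a maximum-principle argument on $w:=|\nabla u|$, following the scheme of the proof of Theorem~\ref{p-th0} (the analogue of Theorem~6.2 in \cite{GJ}), but replacing the role of the strict $p$-concavity \eqref{A3} by the strict-subsolution barrier of Theorem~\ref{barrier}, and using the nonnegative sectional curvature hypothesis to dispose of the ambient curvature term that \eqref{A3} would otherwise absorb. If $\nabla u\equiv 0$ there is nothing to prove; otherwise set $\phi:=(u-\ul u)+b$ with $b=1+\sup_{M_T}(\ul u-u)$, so that $1\le\phi\le C_\phi$ on $\bM_T$, fix $a\in(0,1)$, and suppose $G:=\log w-a\log\phi$ attains its maximum over $\bM_T$ at an interior point $(x_0,t_0)\in M_T\setminus\mathcal{P}M_T$ (the maximum is attained where $w>0$, near which $w$ is smooth). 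Choose a normal orthonormal frame at $x_0$ diagonalizing $\{U_{ij}(x_0,t_0)\}$. At $(x_0,t_0)$ one has $\nabla_iG=0$, hence $\nabla_iw/w=a\,\nabla_i\phi/\phi$, and $\mathcal{L}G\le 0$.

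The first step is a lower bound for $\mathcal{L}(\log w)$. Differentiating \eqref{eqn} once (equation~\eqref{gs1}), contracting with $\nabla u$, and combining with the Bochner-type identity for $w\nabla_{ii}w$ --- in which commuting third covariant derivatives via \eqref{hess-A70} produces the term $R^m_{iil}\nabla_mu\,\nabla_lu=g(R(e_i,\nabla u)e_i,\nabla u)$, while Cauchy--Schwarz disposes of the nonnegative part $\sum_l(\nabla_{il}u)^2-(\nabla_iw)^2\ge 0$ --- one obtains, after using nonnegative sectional curvature to see that $g(R(e_i,\nabla u)e_i,\nabla u)\ge 0$ (for this paper's sign convention for $R$) and dropping it,
\[
F^{ii}w\nabla_{ii}w\ \ge\ w w_t+\psi_{x_k}\nabla_ku+\psi_z w^2+w(\psi_{p_k}-F^{ii}A^{ii}_{p_k})\nabla_kw-F^{ii}A^{ii}_{x_k}\nabla_ku .
\]
The crucial observation is that upon dividing by $w$ and forming $\mathcal{L}w=F^{ii}\nabla_{ii}w+(F^{ii}A^{ii}_{p_k}-\psi_{p_k})\nabla_kw-w_t$, the $\nabla_kw$-terms and $w_t$ telescope away, leaving $\mathcal{L}w/w\ge w^{-2}\bigl(\psi_{x_k}\nabla_ku+\psi_zw^2-F^{ii}A^{ii}_{x_k}\nabla_ku\bigr)$. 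By the growth conditions \eqref{A1-parabolic} taken with $\xi=e_i$ (so $|\xi|^2=1$, $|p|=w$) and the bound on $|u|$, this is $\ge -C\frac{1+w^{\gamma_2}}{w^2}-C\frac{1+w^{\gamma_1}}{w^2}\sum F^{ii}$. Since $\mathcal{L}(\log w)=\mathcal{L}w/w-F^{ii}(\nabla_iw)^2/w^2$ and $F^{ii}(\nabla_iw)^2/w^2=a^2F^{ii}(\nabla_i\phi)^2/\phi^2$ at $(x_0,t_0)$, this furnishes a lower bound for $\mathcal{L}(\log w)$ involving only $-C\frac{1+w^{\gamma_2}}{w^2}$, $-C\frac{1+w^{\gamma_1}}{w^2}\sum F^{ii}$ and $-a^2F^{ii}(\nabla_i\phi)^2/\phi^2$.

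For the remaining term, $\mathcal{L}(-a\log\phi)=-a\,\mathcal{L}\phi/\phi+aF^{ii}(\nabla_i\phi)^2/\phi^2$, and since $\mathcal{L}\phi=\mathcal{L}(u-\ul u)=-\mathcal{L}(\ul u-u)\le-\theta(1+\sum F^{ii})$ by Theorem~\ref{barrier}, with $1\le\phi\le C_\phi$, this is $\ge\frac{a\theta}{C_\phi}(1+\sum F^{ii})+aF^{ii}(\nabla_i\phi)^2/\phi^2$. Adding the two lower bounds and using $0<a<1$ to discard the nonnegative remainder $(a-a^2)F^{ii}(\nabla_i\phi)^2/\phi^2$, we arrive, at $(x_0,t_0)$, at
\[
0\ \ge\ \mathcal{L}G\ \ge\ \frac{a\theta}{C_\phi}\Bigl(1+\sum F^{ii}\Bigr)-C\,\frac{1+w^{\gamma_1}}{w^2}\sum F^{ii}-C\,\frac{1+w^{\gamma_2}}{w^2}.
\]
Here is where $\gamma_1,\gamma_2<2$ enters: once $w(x_0,t_0)$ exceeds a fixed constant, both $C\frac{1+w^{\gamma_1}}{w^2}$ and $C\frac{1+w^{\gamma_2}}{w^2}$ are smaller than $\frac{a\theta}{2C_\phi}$, so the right-hand side is strictly positive, a contradiction. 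Hence $w(x_0,t_0)\le C$, so $\max_{\bM_T}G\le C$; combining this with the case where the maximum of $G$ is attained on $\mathcal{P}M_T$ and with $1\le\phi\le C_\phi$ gives \eqref{3I-R60} with $C_3$ as stated.

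I expect the main obstacle to be the careful derivation of the Bochner-type inequality for $w\nabla_{ii}w$: commuting the third covariant derivatives correctly with \eqref{hess-A70}, isolating the curvature contribution $g(R(e_i,\nabla u)e_i,\nabla u)$ and verifying --- with this paper's convention for $R$ --- that nonnegative sectional curvature indeed gives it the favorable sign, and recognizing the telescoping of the $F^{ii}A^{ii}_{p_k}\nabla_kw$, $\psi_{p_k}\nabla_kw$ and $w_t$ terms against $\mathcal{L}$. Once that inequality is in hand the rest is routine: because $\gamma_1,\gamma_2<2$ every error term may be divided by $w^2$ and rendered negligible against the positive multiple of $1+\sum F^{ii}$ supplied by Theorem~\ref{barrier} (into which \eqref{f1}--\eqref{f2}, \eqref{A2}, \eqref{A4} and \eqref{sub} enter), so, in contrast to Theorem~\ref{p-th0}, neither \eqref{A3} nor \eqref{gj-I105} is needed.
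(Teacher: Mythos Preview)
Your proof is correct and follows essentially the same approach as the paper: the same test function $\log w - a\log\phi$ with $\phi=(u-\ul u)+b$, the same use of nonnegative sectional curvature to drop $R^k_{iil}\nabla_k u\,\nabla_l u\ge 0$, the same application of Theorem~\ref{barrier} to produce the positive multiple of $1+\sum F^{ii}$, and the same absorption of the $O(w^{\gamma_1-2})$ and $O(w^{\gamma_2-2})$ errors. The only difference is in packaging: you organize the computation around $\mathcal{L}G\le 0$ and observe the telescoping of the first-order terms $(\psi_{p_k}-F^{ii}A^{ii}_{p_k})\nabla_k w$ and $w_t$ against the definition of $\mathcal{L}$, whereas the paper substitutes the critical-point equations \eqref{g1-p}--\eqref{g1-p'} earlier and works with the second-derivative test \eqref{g2-p} directly; the two presentations yield the same final inequality \eqref{g10-p-1}.
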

\begin{proof}
Since $(M^n, g)$ has nonnegative sectional curvature, in orthonormal local
frame,
\[ R^{k}_{iil} \nabla_{k} u \nabla_{l} u \geq 0. \]
In the proof of Theorem \ref{p-th0}, similar to \eqref{g3-p}, we have
\begin{equation}
\label{g3-p-1}
\begin{aligned}
w \nabla_{ii} w
     \geq \nabla_{l} u \nabla_l U_{ii} - \frac{a w^2}{\phi} A^{ii}_{p_{k}} \nabla_{k} \phi
           - \nabla_l u A^{ii}_{x_l}.
\end{aligned}
\end{equation}
It follows from \eqref{sub-bar}, \eqref{A1-parabolic}, \eqref{g2-p}, \eqref{p-g11'} and \eqref{g3-p-1} that
\begin{equation}
\label{g10-p-1}
\begin{aligned}
0 \geq \,& \frac{a}{\phi} \mathcal{L} (\ul u - u) + \frac{1}{w^2} \nabla_{l} u \psi_{x_l}
           + \psi_u - \frac{\nabla_{l} u}{w^2} F^{ii} A^{ii}_{x_l}
           + \frac{a - a^2}{\phi^2} F^{ii} |\nabla_{i} \phi|^2\\
   \geq \,& \frac{a}{\phi} \theta (1 + \sum F^{ii}) - C |\nabla u|^{\gamma_1 - 2} \sum F^{ii}
     - C |\nabla u|^{\gamma_2 - 2} + \frac{a - a^2}{\phi^2} F^{ii} |\nabla_{i} \phi|^2
\end{aligned}
\end{equation}
provided $|\nabla u|$ is sufficiently large. Choosing $a$ sufficiently small, we can obtain
a bound $|\nabla u (x_0, t_0)| \leq C$ and \eqref{3I-R60} holds.
\end{proof}
\begin{theorem}
\label{jsui-th2}
Let $u \in C^3 (\bM_T)$ be an
admissible solution of (\ref{eqn}) in $M_T$. Assume, in addition, that
\eqref{A1-parabolic} hold for $\gamma_1, \gamma_2 < 4$,
\begin{equation}
\label{f4}
f \mbox{ is homogeneous of degree one, }
\end{equation}
\begin{equation}
\label{f6}
f_j (\lambda) \geq \nu_1 \Big(1 + \sum f_{i} (\lambda)\Big) \mbox{ for any }
  \lambda \in \Gamma \mbox{ with } \lambda_j < 0,
\end{equation}
where $\nu_1$ is a uniform positive constant
and there exist a continuous function $\bar{\psi} \geq 0$ and a positive constant
$\gamma < 2$ such that when $|p|$ is sufficiently large,
\begin{equation}
\label{p-A5}
p \cdot D_p \psi (x, t, z, p),
    \; - p \cdot D_p A^{\xi \xi} (x, t, z, p)/|\xi|^2
     \leq \bar{\psi}(x, t, z)  (1 + |p|^{\gamma}),
\end{equation}
\begin{equation}
\label{p-A500}
- \psi (x, t, z, p) \leq \bar{\psi}(x, t, z)  (1 + |p|^{\gamma}),
\end{equation}
\begin{equation}
\label{p-G20**}
|A^{\xi \eta} (x, t, z, p)|
     \leq \bar{\psi} (x, t, z) |\xi||\eta| (1 + |p|^{\gamma}),
\;\; \forall \, \xi, \eta \in T_x \bM; \xi \perp \eta.
\end{equation}
Then \eqref{3I-R60} holds.
\end{theorem}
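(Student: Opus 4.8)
The plan is to run the same barrier/maximum‑principle scheme used for Theorems \ref{p-th0} and \ref{p-th1a}, which parallels the proof of Theorem 6.3 in \cite{GJ}. Set $w = |\nabla u|$ and suppose that for a small $a \in (0,1)$ the function $w\phi^{-a}$, with $\phi = (u - \ul u) + b$ and $b = 1 + \sup_{M_T}(\ul u - u)$, attains a positive maximum at an interior point $(x_0, t_0) \in M_T - \mathcal{P} M_T$. Choosing an orthonormal frame diagonalizing $\{U_{ij}(x_0,t_0)\}$, we record the first‑ and second‑order conditions \eqref{g1-p}--\eqref{g2-p}, expand $w\nabla_{ii}w$ via the commutation identity \eqref{hess-A70}, and substitute the once‑differentiated equation \eqref{gs1}, exactly as in \eqref{g3-p}--\eqref{p-g11'}. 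Contracting with $F^{ii}$ produces a lower bound for $0$ made of the positive terms $\frac{1}{w^2}F^{ii}(\nabla_{ik}u)^2$, $\frac{a-a^2}{\phi^2}F^{ii}(\nabla_i\phi)^2$ and $\frac{a}{\phi}F^{ii}\nabla_{ii}(u-\ul u)$, minus error terms from $\psi_{x_l}\nabla_l u$, $\psi_z|\nabla u|^2$, $F^{ii}A^{ii}_{x_l}\nabla_l u$, the $A^{ii}_{p_k}$‑contributions and the off‑diagonal entries $A^{ik}$, each carrying the powers of $w$ prescribed by \eqref{A1-parabolic} (growth $w^{\gamma_1}$, $w^{\gamma_2}$) and by \eqref{p-A5}--\eqref{p-G20**} (growth $w^{\gamma}$).

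The substitute for the pointwise convexity hypothesis \eqref{A3}, which is unavailable here, is the homogeneity \eqref{f4} together with \eqref{f6}. First, using the concavity of $F$ and homogeneity, choose $B > 0$ large with $F(2Bg + \ul U) \geq F(Bg) = Bf({\bf 1})$; then $F^{ii}(\ul U_{ii} - U_{ii}) \geq Bf({\bf 1}) - 2B\sum F^{ii} - \psi[u] - u_t$ as in \eqref{g12}, which combined with $-\psi[u] \leq C(1 + w^\gamma)$ from \eqref{p-A500} controls $\frac{a}{\phi}F^{ii}\nabla_{ii}(u-\ul u)$ from below. Second, Euler's identity gives $\sum f_i\lambda_i = f(\lambda) = \psi[u] + u_t$, which is bounded, so $\sum_{\lambda_i > 0}f_i\lambda_i \leq C + \sum_{\lambda_i < 0}f_i|\lambda_i|$. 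Third, writing $\nabla_{ik}u = U_{ik} - A^{ik}$, using the diagonality of $\{U_{ij}\}$ and $|A^{ik}| \leq C(1 + w^\gamma)$ from \eqref{p-G20**}, the first good term satisfies $\frac{1}{w^2}F^{ii}(\nabla_{ik}u)^2 \geq \frac{1}{2w^2}\sum f_i\lambda_i^2 - C(1 + w^{2\gamma - 2})\sum f_i$.

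One then splits into cases according to the most negative eigenvalue of $\{U_{ij}(x_0,t_0)\}$. If some $\lambda_j < 0$ with $|\lambda_j|$ at least a suitable positive power of $w$, then \eqref{f6} yields $f_j \geq \nu_1(1 + \sum f_i)$, so the single term $\frac{1}{2w^2}f_j\lambda_j^2 \geq \frac{\nu_1}{2w^2}|\lambda_j|^2(1 + \sum f_i)$ dominates all the error terms precisely because $\gamma_1,\gamma_2 < 4$ and $\gamma < 2$. If instead every eigenvalue is bounded below by the negative of a lower power of $w$, then combining $\sum f_i\lambda_i = f \leq C$ with Cauchy--Schwarz, the quadratic term $\frac{1}{2w^2}\sum f_i\lambda_i^2$, and the term $\frac{a-a^2}{\phi^2}F^{ii}(\nabla_i\phi)^2$, which by \eqref{g1-p} and $w\nabla_i w = \nabla_l u\,\nabla_{il}u$ carries a factor $\sim \frac{1}{w^4}F^{ii}(\nabla_i u)^2\lambda_i^2$, absorbs the remaining $w^{\gamma_1-2}\sum F^{ii}$, $w^{\gamma_2-2}$ and $w^{\gamma}$‑type errors, again using $\gamma_1 - 2 < 2$, $\gamma_2 < 4$ and $\gamma < 2$. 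After fixing $a$ small and absorbing, both cases give $w(x_0,t_0) \leq C$, hence \eqref{3I-R60}.

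The main obstacle is this case analysis. Without \eqref{A3} the clean positive term $\frac{ac_0|\nabla\phi|^2}{2\phi}\sum F^{ii}$ used in Theorem \ref{p-th0} is missing, and since $f$ is only degree‑one homogeneous, $\sum f_i$ need not be bounded; one must therefore simultaneously extract positivity from $F^{ii}\lambda_i^2$ and from $F^{ii}(\nabla_i u)^2\lambda_i^2$ and keep it ahead of the gradient‑polynomial errors. Hypothesis \eqref{f6} is exactly the structural input that couples the size of a single $f_j$ to $\sum f_i$ when $\lambda_j < 0$, and the differing thresholds $\gamma_1,\gamma_2 < 4$ for the $x$‑derivative terms versus $\gamma < 2$ for the $p$‑derivative and off‑diagonal terms are what these absorption inequalities force. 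The remaining bookkeeping — handling the $A^{ii}_{p_k}\nabla_k\phi$ terms via \eqref{p-A5} and the $u_t$‑ and $\psi$‑terms — is routine and identical to \cite{GJ}.
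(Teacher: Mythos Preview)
Your choice of auxiliary function is the crux of the difficulty, and it differs from the paper's in a way that matters. The paper does \emph{not} take $\phi=(u-\ul u)+b$ here; it takes $\phi=-u+\sup_{M_T}u+1$, so that $\nabla_i\phi=-\nabla_i u$. With this choice the first–order condition \eqref{g1-p}, together with $w\nabla_i w=\nabla_l u\,\nabla_{il}u$, gives at the maximum point (choosing $e_1$ so that $\nabla_1 u\ge w/n$)
\[
U_{11}=-\frac{a}{\phi}\,|\nabla u|^2+A^{11}+\frac{1}{\nabla_1 u}\sum_{k\ge 2}\nabla_k u\,A^{1k}
\le -\frac{a}{\phi}\,|\nabla u|^2+C\big(1+|\nabla u|+|\nabla u|^{\gamma}\big)<0
\]
for $|\nabla u|$ large, using \eqref{p-G20} and \eqref{p-G20**}. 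Hence \eqref{f6} applies \emph{automatically} with $j=1$, giving $f_1\ge\nu_1(1+\sum f_i)$; the good term $c_1F^{ii}|\nabla_i u|^2\ge c_1 f_1(\nabla_1 u)^2\gtrsim w^2(1+\sum f_i)$ then dominates every error in \eqref{g18-p}. No case split is needed, and the $u_t$ contributions cancel because $\phi_t=-u_t$ while $F^{ii}U_{ii}=u_t+\psi$ via homogeneity.

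Your route, keeping $\phi=(u-\ul u)+b$ and splitting into cases, has a real gap in Case~2. First, the identity $\sum f_i\lambda_i=f\le C$ is not available: $f(\lambda)=u_t+\psi[u]$ and neither $u_t$ nor $\psi[u]$ is bounded from above at this stage (\eqref{p-A500} only bounds $-\psi$). Second, even granting a bound on $f$, when no eigenvalue is sufficiently negative there is nothing to trigger \eqref{f6}, so none of your positive quantities $\frac{1}{2w^2}\sum f_i\lambda_i^2$ or $\frac{1}{w^4}\sum f_i(\nabla_i u)^2\lambda_i^2$ is coupled to $\sum f_i$; they therefore cannot absorb an error of size $w^{\gamma_1-2}\sum F^{ii}$ with $\gamma_1$ close to $4$ (consider, e.g., the scenario where the index with large $f_i$ has small $\nabla_i u$ and moderate $\lambda_i$). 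Also note a sign slip: from \eqref{g2-p} the term appearing with a favorable sign is $-\frac{a}{\phi}F^{ii}\nabla_{ii}\phi=\frac{a}{\phi}F^{ii}\nabla_{ii}(\ul u-u)$, not $\frac{a}{\phi}F^{ii}\nabla_{ii}(u-\ul u)$. The fix is to change $\phi$ to $-u+\text{const}$ as in the paper, which forces one eigenvalue negative and eliminates the case analysis entirely.
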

\begin{proof}
In the proof of Theorem \ref{p-th0}, we take
$\phi = - u + \sup_{M_T} u + 1$.
By the concavity of $A^{ii}$ with respect to $p$,
\begin{equation}
\label{p-G20}
 A^{ii} = A^{ii} (x, t, \nabla u)
\leq A^{ii} (x, t, 0) + A^{ii}_{p_k} (x, t, 0) \nabla_k u
\end{equation}
Thus, from \eqref{f4}, \eqref{p-A500} and \eqref{p-G20}, we find
\begin{equation}
\label{p-g17}
\begin{aligned}
- F^{ii} \nabla_{ii} \phi = F^{ii} \nabla_{ii} u = F^{ii} U_{ii} - \,& F^{ii} A^{ii}
   = u_t + \psi - F^{ii} A^{ii}\\
     \geq \,& u_t + \psi - C (1 + |\nabla u|) \sum F^{ii}\\
     \geq \,& u_t - C (1 + |\nabla u|) \sum F^{ii} - C |\nabla u|^{\gamma}.
\end{aligned}
\end{equation}
By virtue of \eqref{g2-p}, \eqref{g3-p}, \eqref{p-g11'}, \eqref{A1-parabolic},
\eqref{p-A5} and \eqref{p-g17}, we see that for $a < 1$,
\begin{equation}
\label{g18-p}
\begin{aligned}
0 \geq & \frac{(a - a^2)}{\phi^2} F^{ii} |\nabla_{i} u|^2
       + \frac{\nabla_{l} u \psi_{x_l}}{w^2} + \psi_u
           - \frac{a}{\phi} \psi_{p_k} \nabla_k u - \frac{a}{\phi} u_t \\
        & + \frac{a}{\phi} F^{ii} A^{ii}_{p_{k}} \nabla_{k} u
            - F^{ii} \frac{\nabla_l u A^{ii}_{x_l}}{w^2}
            + \frac{a}{\phi} u_t\\
            & - C |\nabla u|^{\gamma} - C (1 + |\nabla u|)\sum F^{ii} \\
  \geq & c_1 F^{ii} |\nabla_{i} u|^2 - C (|\nabla u|^{\gamma_2 - 2} + |\nabla u|^\gamma)\\
             & - C(1 + |\nabla u| + |\nabla u|^{\gamma_1 - 2} + |\nabla u|^\gamma)  \sum F^{ii}
\end{aligned}
\end{equation}
provided $|\nabla u|$ is sufficiently large.

Without loss of generality we assume
$\nabla_{1} u (x_{0}, t_0) \geq \frac{1}{n} |\nabla u (x_{0}, t_0)| > 0$.
Recall that $U_{ij} (x_0, t_0)$ is diagonal. By \eqref{g1-p}, \eqref{p-G20} and
\eqref{p-G20**}, we have
\begin{equation}
\label{p-G30}
\begin{aligned}
U_{11} = \,& - \frac{a}{\phi} |\nabla u|^2 + A^{11}
 + \frac{1}{\nabla_1 u} \sum_{k \geq 2} \nabla_k u A^{1k}\\
 \leq \,& - \frac{a}{\phi} |\nabla u|^2
      + C (1 + |\nabla u| + |\nabla u|^{\gamma-2}) < 0
\end{aligned}
\end{equation}
provided $|\nabla u|$ is sufficiently large. Therefore, by \eqref{f4},
\[ f_{1} \geq \nu_{0} \Big(1 + \sum^n_{i = 1} f_{i}\Big) \]
and a bound $|\nabla u (x_0, t_0)| \leq C$ follows from \eqref{g18-p}.
\end{proof}

{\bf Acknowledgement.}
This is an improvement of part of my thesis. I wish to thank my adviser
Professor Bo Guan for leading me to this problem and many useful suggestions
and comments.


\begin{thebibliography}{99}
\bibitem{A}B. Andrews, Contraction of convex hypersurfaces in Euclidean space. Calc. Var. Partial Differ. Eqns. 2 (1994) 151-171.
\bibitem{CNS}L. Caffarelli, L. Nirenberg, J. Spruck, Dirichlet problem for nonlinear second order elliptic equations III, Functions of the eigenvalues of the Hessian. Acta Math. 155 (1985) 261-301.
\bibitem{GC}C. Gerhardt, Closed Weingarten hypersurfaces in Riemannian manifolds. J. Differ. Geom. 43 (1999) 612-641.
\bibitem{G1}B. Guan, The Dirichlet problem for Hessian equations on Riemannian manifolds Calc. Var. Partial Differ. Eqns. 8 (1999) 45-69.
\bibitem{G}B. Guan, Second order estimates and regularity for fully nonlinear elliptic equations on Riemannian manifolds. Duke Math. J. 163 (2014) 1491-1524.
\bibitem{G14}B. Guan, The Dirichlet problem for fully nonlinear elliptic equations on Riemannian manifolds. preprint. arXiv: 1403.2133.
\bibitem{GJ}B. Guan, H.-M. Jiao, Second order estimates for Hessian type fully nonlinear elliptic
    equations on Riemannian manifolds. preprint. arXiv: 1401.7391
\bibitem{IL1}N.M. Ivochkina, O.A. Ladyzhenskaya, On parabolic equations generated by symmetric functions of the principal curvatures of the evolving surface or of the eigenvalues of the Hessian. Part I: Monge-Amp\`{e}re equations. St. Petersburg Math. J. 6 (1995): 575-594.
\bibitem{IL2}N.M. Ivochkina, O.A. Ladyzhenskaya, Flows generated by symmetric functions of the eigenvalues of the Hessian. Zap. Nauchn. Sem. S.-Peterburg. Otdel. Mat. Inst. Steklov. (POMI) 221 (1995) 127-144 (Russian);
    English transl. in J. Math. Sci. 87 (1997) 3353-3365.
\bibitem{JS}H.-M. Jiao, Z.-N. Sui, The first initial-boundary value problem for a class of fully nonlinear parabolic equations on Riemannian manifolds. to appear in Int. Math. Res. Notices.
\bibitem{LiYY90}Y.-Y. Li, Some existence results of fully nonlinear elliptic equations
    of Monge-Amp\`ere type. Comm. Pure Applied Math. 43 (1990) 233-271.
%\bibitem{MTW05}Ma, X.N., Trudinger, N.S., Wang, X.J. (2005). Regularity of potential functions of the optimal transportation problem. {\em Arch. Rational Mech. Anal.} 177: 151-183.
\bibitem{T}N.S. Trudinger, On the Dirichlet problem for Hessian equations. Acta Math. 175 (1995) 151-164.
\bibitem{U}J. I. E. Urbas, Hessian equations on compact Riemannian manifolds. Nonlinear Problems in Mathematical Physics and Related Topics, II, pp. 367-377. New York: Kluwer/Plenum, 2002.

\end{thebibliography}
\end{document}